\crefname{section}{Section}{Sections}
\crefname{figure}{Figure}{Figures}
\crefname{table}{Table}{Tables}
\crefname{equation}{}{}
\crefname{theorem}{Theorem}{Theorems}
\crefname{lemma}{Lemma}{Lemmas}
\crefname{remark}{Remark}{Remarks}
\crefname{problem}{Problem}{Subproblems}
\newtheorem{theorem}{Theorem}[section]
\newtheorem{subproblem}{Subproblem}[section]
\newtheorem{lemma}{Lemma}[section]
\theoremstyle{definition}
\newtheorem{example}{\noindent Example}
\begin{document}
	
	\title{Jointly determining the point sources and obstacle from Cauchy data}
	
	\author{ Deyue Zhang\thanks{School of Mathematics, Jilin University, Changchun, China, {\it dyzhang@jlu.edu.cn}},
	              Yan Chang\thanks{School of Mathematics, Harbin Institute of Technology, Harbin, China. {\it 21B312002@stu.hit.edu.cn}}
              \ and Yukun Guo\thanks{School of Mathematics, Harbin Institute of Technology, Harbin, China. {\it ykguo@hit.edu.cn} (Corresponding author)},}
              
\date{}
\maketitle
	
\begin{abstract}
A numerical method is developed for recovering both the source locations and the obstacle from the scattered Cauchy data of the time-harmonic acoustic field. First of all, the incident and scattered components are decomposed from the coupled Cauchy data by the representation of the single-layer potentials and the solution to the resulting linear integral system. As a consequence of this decomposition, the original problem of joint inversion is reformulated into two decoupled subproblems: an inverse source problem and an inverse obstacle scattering problem. Then, two sampling-type schemes are proposed to recover the shape of the obstacle and the source locations, respectively. The sampling methods rely on the specific indicator functions defined on target-oriented probing domains of circular shape. The error estimates of the decoupling procedure are established and the asymptotic behaviors of the indicator functions are analyzed. Extensive numerical experiments are also conducted to verify the performance of the sampling schemes.
\end{abstract}
	
\noindent{\it Keywords}: inverse scattering, inverse source problem, Cauchy data, sampling method, reference source
	
\maketitle	
	
\section{Introduction}

The inverse scattering problems concerning the reconstruction of unknown targets from measured scattering data have received enduring attention due to their wide applications in medical imaging, nondestructive testing, ocean acoustics, and radar sensing (see, e.g., \cite{CK19}). Roughly speaking, the conventional inverse scattering problems can broadly be divided into two modalities: inverse obstacle/medium scattering problems and inverse source problems. For inverse obstacle scattering models, the excitation source is known and, in general, can be artificially deployed as desired. Then the interaction of this active incidence with the unknown impenetrable scatterer induces an outgoing response, which is called the scattered wave. By measuring such data, the goal of the inverse obstacle problem is to reversely identify the features of the obstacle. On the contrary, if there is no perturbation on the incoming wave, then the signal caused by the source emanation/radiation can be directly recorded by the receivers. Correspondingly, the backward process of determining the source's fluctuation or distribution is acknowledged as the inverse source problem.

With the rapid growth of the inverse scattering theory,  a number of powerful algorithms have been developed to tackle the inverse obstacle problems or inverse source problems. State-of-the-art approaches such as the sampling method, the decomposition method, the iteration scheme, and the recursive linearization method, to name a few, have been successfully applied to various inverse scattering problems (see \cite{AKS09, Bao1, Cakoni1, CCH13, LL17, MHC14, ZWWG22} and the reference therein).  In addition, concerning the inverse source problem in domains with possible unknown obstacles, we refer to \cite{LTT11} for a source discovery algorithm in the high-frequency setting. We are now concerned with a practical application where the autonomous observer sent into an unknown environment tries to efficiently determine the signal sources and the non-penetrable solid obstacles located along the observer's path. As an intrinsic couple of the inverse obstacle and inverse source problems, such co-inversion problems attract great interest and we refer to \cite{CG22, FDTZ20, LL17, LHY2, ZWG22, ZWGC23} for the relevant studies on simultaneous recovery of the multiple targets in the scattering problems. Compared with the inverse problem with a single target to be determined, the joint inversion problem inherits the ill-posedness and nonlinearity from the inverse obstacle problem. In addition, the lack of measured information further makes this problem more challenging.



In this paper, we propose a novel sampling-type method to jointly determine the obstacle and its excitation source from Cauchy data. Using near-field Cauchy measurement (both Dirichlet data and Neumann data) is beneficial to the solution of some inverse scattering problems. For example, the Cauchy data avoids the numerical evaluation of the desired normal derivative on the measurement surface and can be naturally integrated into the reciprocity gap functional in imaging schemes that are based on integral equations. In \cite{CH05} and \cite{SGM16}, the Cauchy measurements are utilized to treat the inverse scattering problem of imaging the obstacle or cavity. We also refer to \cite{ZGLL19} for direct locating acoustic multipolar sources and \cite{WGB22} for recovering elastic moment tensor point sources using Cauchy data. In particular, the Cauchy data in the present paper has the advantage of reducing the twinned measurement surfaces \cite{ZWGC23} to a single one and it enables us to identify both the interior and exterior source points. Another advantage of the proposed method is the easy-to-implement decomposition scheme, which can be manipulated via the regularized linear integral equations. In addition, the subsequent sampling procedure is adaptively designed such that the indicators and the circular probing domains fit well the goal of reconstruction. Moreover, the overall algorithm does not rely on any solution process of the forward scattering problem, therefore the cost of computation is low. All these aforementioned features constitute the novelties of this work.

The rest of this paper is arranged as follows. We first present the mathematical formulation of the inverse problem and establish a result on the unique identification of the point sources in \cref{sec: model_problem}. In \cref{sec: decoupling} we introduce the layer-potential approach of decomposing the co-inversion problem. The stability of the decomposition is then analyzed. In \cref{sec:algorithms}, we propose several sampling schemes for locating the source points and imaging the shape of the obstacle. The indicating behavior of the imaging functions is also investigated. Finally, we present some numerical experiments that demonstrate the performance of the method in \cref{sec: numerical_experiments}.
	
\section{Problem setting and uniqueness}\label{sec: model_problem}
	
Let us now introduce the mathematical model of the forward and inverse problem. In this paper, we restrict ourselves to the two-dimensional case and remark that the extension to the three-dimensional case follows analogously. Let $D \subset\mathbb{R}^2$ be an open and simply connected domain with $C^2$ boundary $\partial D$. For a generic point $z\in \mathbb{R}^2\backslash\overline{D}$, the incident field $u^i$ due to the point source located at $z$ is given by
\begin{equation}\label{pointsource}
		u^i (x; z)=\Phi(x, z):=\frac{\mathrm{i}}{4}H_0^{(1)}(k|x-z|), \quad x\in  \mathbb{R}^2\setminus \left({\overline{D}\cup \{z\}}\right),
\end{equation}
where $H_0^{(1)}$ is the Hankel function of the first kind of order zero, and $k>0$ is the wavenumber. Then, the forward scattering problem can be formulated as the following: find the scattered field $u^s(x; z)$ which satisfies the following boundary value problem:
\begin{align}
	\Delta u^s+ k^2 u^s & = 0\quad \mathrm{in}\ \mathbb{R}^2\backslash\overline{D},\label{eq:Helmholtz} \\
	\mathscr{B}u & = 0 \quad \mathrm{on}\ \partial D, \label{eq:boundary_condition} \\
	\lim\limits_{r:=|x|\to\infty} \sqrt{r}\bigg(&\frac{\partial u^s}{\partial r} - \mathrm{i} k u^s\bigg)=0, \label{eq:Sommerfeld}
\end{align}
where $u(x; z)=u^i(x; z)+u^s(x; z)$ denotes the total field and \cref{eq:Sommerfeld} is the Sommerfeld radiation condition. The boundary operator $\mathscr{B}$ in \cref{eq:boundary_condition} is defined by
\begin{equation}\label{BC}
	\mathscr{B}u=
	\begin{cases}
		u, & \text{for a sound-soft obstacle},  \\
		\partial_\nu u+ \mathrm{i} k\lambda u, & \text{for an impedance obstacle},
	\end{cases}
\end{equation}
where $\nu$ is the unit outward normal to $\partial D$ and $\lambda$ is a real parameter. This general boundary condition \cref{BC} covers the Dirichlet/sound-soft boundary condition, the Neumann/sound-hard boundary condition ($\lambda=0$), and the impedance boundary condition ($\lambda\neq 0$). It is well known that the forward scattering problem \eqref{eq:Helmholtz}-\eqref{eq:Sommerfeld} admits a unique solution $u^s\in H_{\rm loc}^1(\mathbb{R}^2\backslash\overline{D})$ (see, e.g., \cite{CK19}).
	
Let $S=\cup_{j=1}^{N}\{z_j\}\subset \mathbb{R}^2\backslash \overline{D}$ be a set of distinct source points with $N$ the number of source points, and 
$$
	u^i(x; S)=\sum\limits_{j=1}^{N} u^i(x; z_j).
$$  

Denote by $u^s(x; S)$ the scattered field corresponding to the incident field $u^i(x; S)$, and let  $u(x; S)=u^i(x; S)+u^s(x; S)$ be the total field. Take a smooth measurement curve $\Gamma=\partial \Omega$  such that $D\subset\Omega$ and $\Gamma\cap S=\emptyset$. Collect the Cauchy data $u(x; S)$ and $\partial_{n} u(x; S)$ on the curve $\Gamma$ with  $n$ being the unit outward normal to $\Gamma$. Then, the co-inversion problem under consideration is to determine the obstacle-source pair $(\partial D, S)$ from the measurements $\mathbb{U}:=\{u(x; S), \partial_{n} u(x; S): x\in \Gamma \}$, namely,
	\begin{equation}\label{co-inversion}
		\mathbb{U}\rightarrow (\partial D, S).
	\end{equation}
	
	\begin{figure}[htp]
		\centering
		\begin{tikzpicture}[scale=1.2, thick]
			\pgfmathsetseed{8}
			\draw plot [smooth cycle, samples=5, domain={1:8}]
			(\x*360/8+5*rnd:.7cm+1.cm*rnd) node at (0.1,0.1) {$D$};
			\draw node at (-0.75, 0.9) {$\partial D$};
			
			\draw[magenta]  node at (2.1,-1.2) {$ \Omega$};
			\draw[magenta] (0.1, 0) circle (2.5cm) node at (-1.5,1.6) {$ \Gamma $};
			
			\filldraw [red] (-2.6,0.2) circle (1.2pt);
			\filldraw [red] (-2.55,-0.4) circle (1.2pt);
			\filldraw [red] (-2.65,-0.1) circle (1.2pt);
			\filldraw [red] (-1.9,-0.3) circle (1.2pt);
			\filldraw [red] (-1.95,0.0) circle (1.2pt);
				
			\filldraw [red] (0.2,1.5) circle (1.2pt);
			\filldraw [red] (-0.1,1.45) circle (1.2pt);	
			\filldraw [red] (0.5,1.45) circle (1.2pt);
			\filldraw [red] (0.2,2.7) circle (1.2pt);
			\filldraw [red] (-0.1,2.7) circle (1.2pt);
			
			\filldraw [red] (1,-1.8) circle (1.2pt);
			\filldraw [red] (1.25,-1.7) circle (1.2pt);
			\filldraw [red] (1.5,-1.55) circle (1.2pt);
				
			\filldraw [red] (-0.5,-2.65) circle (1.2pt);
			\filldraw [red] (-0.9,-2.6) circle (1.2pt);
			\filldraw [red] (-1.2,-2.45) circle (1.2pt);	
		
			\filldraw [red] (2.8,0.5) circle (1.2pt);
			\filldraw [red] (2.8,-0.1) circle (1.2pt);
			\filldraw [red] (2.9,0.2) circle (1.2pt);
			
		\end{tikzpicture}
		\caption{Illustration of the co-inversion for imaging the obstacle and source points.} \label{fig:illustration1}
	\end{figure}
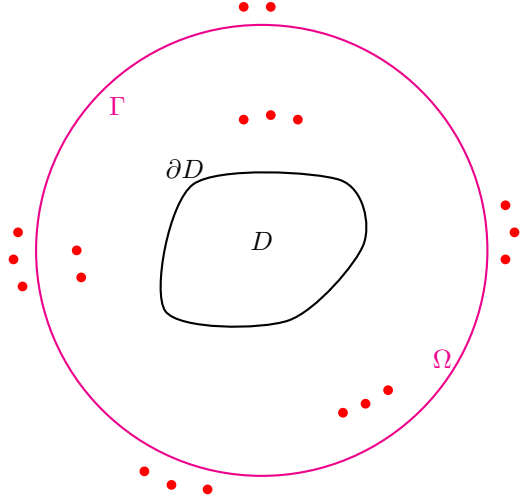
	
	We refer to Figure \ref{fig:illustration1} for an illustration of the geometry setup of the co-inversion problem \eqref{co-inversion}. In \cref{fig:illustration1}, the source points are marked by the red dots and the measurement curve is denoted by the magenta circle. We first present a uniqueness result on identifying the sources.
	
\begin{theorem}\label{Thm2.1}
		The locations of the source points $S$ can be uniquely determined by the Cauchy data $\mathbb{U}$. 
\end{theorem}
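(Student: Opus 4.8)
The plan is to reduce the claim to a comparison of two source configurations and then combine unique continuation with a matching of the field's singularities. Suppose the same Cauchy data $\mathbb{U}$ were produced by two source sets $S_1=\cup_{j}\{z_j^{(1)}\}$ and $S_2=\cup_{l}\{z_l^{(2)}\}$ for the same obstacle $D$, with total fields $u_1:=u(\cdot;S_1)$ and $u_2:=u(\cdot;S_2)$. Setting $w:=u_1-u_2$, each $u_m=u^i(\cdot;S_m)+u^s(\cdot;S_m)$ is a finite superposition of outgoing fundamental solutions $\Phi(\cdot,z_j^{(m)})$ plus a radiating scattered field, so $w$ satisfies the Helmholtz equation $\Delta w+k^2 w=0$ in $\mathbb{R}^2\setminus(\overline D\cup S_1\cup S_2)$, is radiating at infinity, and obeys $w=\partial_n w=0$ on $\Gamma$ by hypothesis.

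First I would exploit that $\Gamma=\partial\Omega$ separates the plane into the bounded region $\Omega$ and the unbounded region $\mathbb{R}^2\setminus\overline\Omega$, and that solutions of the Helmholtz equation are real-analytic wherever the equation holds. Since $w$ carries vanishing Cauchy data on $\Gamma$, Holmgren's uniqueness theorem makes $w$ vanish in a one-sided neighborhood of $\Gamma$, and real-analyticity then propagates this. Working outward, $w\equiv 0$ throughout the connected set $\mathbb{R}^2\setminus(\overline\Omega\cup S_1\cup S_2)$; working inward along the annular region $\Omega\setminus\overline D$---which is connected because $D$ is simply connected with $D\subset\Omega$, and remains connected after deleting finitely many points---gives $w\equiv 0$ on $(\Omega\setminus\overline D)\setminus(S_1\cup S_2)$ as well. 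In particular $w$ vanishes on the complement of the finitely many source points in the full region accessible from $\Gamma$.

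The conclusion then follows by reading off the singularities. Every source point is an isolated singularity of the corresponding field: as $x\to z^*\in S_m$ one has $u_m(x;S_m)=\Phi(x,z^*)+O(1)$, and $|\Phi(x,z^*)|\to\infty$ because $H_0^{(1)}(k|x-z^*|)\sim (2\mathrm{i}/\pi)\ln|x-z^*|$. Hence if $S_1\neq S_2$ we may select a point $z^*$ in the symmetric difference, say $z^*\in S_1\setminus S_2$; then $u_1$ blows up logarithmically at $z^*$ while $u_2$ stays bounded near $z^*$, being a regular Helmholtz solution there, so $w=u_1-u_2$ is unbounded at $z^*$. This contradicts $w\equiv 0$ on the punctured neighborhood of $z^*$ established above. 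Therefore $S_1=S_2$, which is the assertion.

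The step I expect to be the most delicate is the interior continuation: one must verify that $\Omega\setminus\overline D$ is genuinely reachable from $\Gamma$ and stays connected after removing the interior sources---so that no source can hide behind the obstacle---and one must justify that the logarithmic singularity of $\Phi$ cannot be cancelled by the smooth remainder, which is precisely what identifies the source set with the singular set of the field. The analogous continuation into the unbounded component then accounts for every source lying outside $\Omega$, completing the argument.
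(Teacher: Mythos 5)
Your argument is correct and follows essentially the same route as the paper's proof: Holmgren's uniqueness theorem propagates the vanishing of the difference of the total fields away from the source points, and the logarithmic blow-up of $\Phi(\cdot,z^*)$ at a point $z^*$ in the symmetric difference of the two source sets yields the contradiction. Your write-up is in fact the more careful one (explicit connectivity of the continuation regions and an explicit choice of $z^*\in S_1\setminus S_2$, versus the paper's looser selection of two points $w_1\neq w_2$), the only difference being that the paper nominally allows two distinct obstacles $D_1\neq D_2$ while you fix a common $D$ --- a generality the paper itself does not fully carry through.
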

\begin{proof}
	Assume $D_1$ and $D_2$ are two obstacles such that $ D_\ell\subset \Omega, \ell=1,2$. Let $\mathcal{S}_1$ and $\mathcal{S}_2$ be two sets of distinct source points, and denote by $u(x;\mathcal{S}_1)$ and $u(x; \mathcal{S}_2)$ the total fields generated by $D_1$, $u^i(x; \mathcal{S}_1)$ and $D_2$, $u^i(x; \mathcal{S}_2)$, respectively. Assume that
	\begin{equation*}
		u(x; \mathcal{S}_1)=u(x; \mathcal{S}_2), \quad \partial_{n} u(x; \mathcal{S}_1)=\partial_{n} u(x; \mathcal{S}_2), \quad x\in \Gamma.
	\end{equation*}
		We claim that $\mathcal{S}_1=S_2$. Otherwise, let $w_1\in \mathcal{S}_1$, $w_2\in \mathcal{S}_2$ and $w_1\ne w_2 $. From the Holmgren’s principle \cite[Theorem 2.3]{CK19}, we have
		\begin{equation*}
			u(x; \mathcal{S}_1)=u(x; \mathcal{S}_2),\quad \mathrm{in}\ B_2\setminus ( D\cup \{S_1,S_2\}).
		\end{equation*}
		By letting $x\to w_2$ and using the boundedness of $u^s(x; \mathcal{S}_\ell) (\ell=1,2)$, we have that $u(x; \mathcal{S}_1)$ is bounded and $u(x; \mathcal{S}_2)$ tends to infinity, which is a contradiction. Hence $\mathcal{S}_1=\mathcal{S}_2$. The proof is completed.
	\end{proof}
	
\section{Decoupling the co-inversion problem}\label{sec: decoupling}
	
Based on the single-layer representation, we propose a method to decompose the total field into two parts. Then, the co-inversion problem can be decoupled into two subproblems: an inverse source problem and an inverse obstacle scattering problem. 
	
More precisely, the set $S$ is divided into $S_1$ and $S_2$ by the measurement curve $\Gamma$, that is,
	\begin{equation*}
		S=S_1\cup S_2, \quad  S_1\subset \Omega\setminus \overline{D},
		\quad S_2\subset \mathbb{R}^2 \setminus \overline{\Omega}.
	\end{equation*}
	Let
	\begin{equation*}
		u^i(x;S_\ell)=\sum\limits_{z_j\in S_\ell}u^i(x; z_j), \quad \ell=1,2.
	\end{equation*}
	Then, it is readily seen 
	\begin{equation*}
		u(x;S)=v(x;S)+u^i(x;S_2),
	\end{equation*}
	where $v(x;S)=u^i(x;S_1)+u^s(x;S)$. In the following, we will decouple  $v(x; S)$ and $u^i(x; S_2)$ on $\Gamma$ from the total field $u(x; S)$ by the single-layer potential method.
	
To this aim, we take two closed curves $\Gamma_1=\partial\Omega_1$ and  $\Gamma_2=\partial\Omega_2$ as schematically shown in Figure \ref{fig:illustration2} such that $(D\cup S_1)\subset \Omega_1\subset \Omega$, $(\Omega\cup S_2)\subset \Omega_2$, and $k^2$ is not a Dirichlet eigenvalue for the negative Laplacian in $\Omega_1$ or $\Omega_2$.
	
	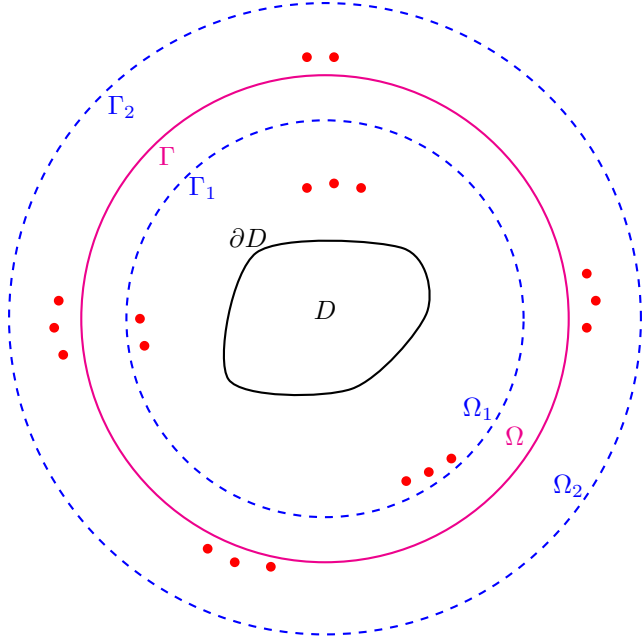
\begin{figure}
		\centering
		\begin{tikzpicture}[scale=1.2, thick]
		\pgfmathsetseed{8}
		\draw plot [smooth cycle, samples=5, domain={1:8}]
		(\x*360/8+5*rnd:.7cm+1.cm*rnd) node at (0.1,0.1) {$D$};
		\draw node at (-0.75, 0.9) {$\partial D$};
		
		\draw[magenta]  node at (2.2,-1.3) {$ \Omega$};
		\draw[magenta] (0.1, 0) circle (2.7cm) node at (-1.65,1.8) {$ \Gamma $};
		
		\filldraw [red] (-2.85,0.2) circle (1.2pt);
		\filldraw [red] (-2.8,-0.4) circle (1.2pt);
		\filldraw [red] (-2.9,-0.1) circle (1.2pt);
		\filldraw [red] (-1.9,-0.3) circle (1.2pt);
		\filldraw [red] (-1.95,0.0) circle (1.2pt);
		
		\filldraw [red] (0.2,1.5) circle (1.2pt);
		\filldraw [red] (-0.1,1.45) circle (1.2pt);	
		\filldraw [red] (0.5,1.45) circle (1.2pt);
		\filldraw [red] (0.2,2.9) circle (1.2pt);
		\filldraw [red] (-0.1,2.9) circle (1.2pt);
		
		\filldraw [red] (1,-1.8) circle (1.2pt);
		\filldraw [red] (1.25,-1.7) circle (1.2pt);
		\filldraw [red] (1.5,-1.55) circle (1.2pt);
		
		\filldraw [red] (-0.5,-2.75) circle (1.2pt);
		\filldraw [red] (-0.9,-2.7) circle (1.2pt);
		\filldraw [red] (-1.2,-2.55) circle (1.2pt);	
		
		\filldraw [red] (3,0.5) circle (1.2pt);
		\filldraw [red] (3,-0.1) circle (1.2pt);
		\filldraw [red] (3.1,0.2) circle (1.2pt);
		\draw[dashed] [blue] (0.1, 0) circle (2.2cm) node at(1.8,-1.)  {$\Omega_1$} node at (-1.25,1.45) {$ \Gamma_1 $};
		\draw[dashed] [blue] (0.1, 0) circle (3.5cm) node at (2.8, -1.85) {$\Omega_2$} node at (-2.15,2.35) {$ \Gamma_2 $};
		
		\end{tikzpicture}
		\caption{Geometry setting of the decomposition strategy.} \label{fig:illustration2}
	\end{figure}

\subsection{Decomposition of the total field}\label{sec:Decomposition}
	
Motivated by the recent work \cite{ZWGC23} for solving the interior co-inversion problem, we approximate the field $v(x; S)$ and the incident field $u^i(x; S_2)$ by the following single-layer potentials, respectively,
\begin{align}
	& v(x; S)  \approx \int_{\Gamma_1}\Phi(x, y)\varphi_{1}(y)\mathrm{d}s(y),
	\quad x\in \mathbb R^2\backslash\overline \Omega_1, \label{singlelayer1} \\
	& u^i(x; S_2)  \approx \int_{ \Gamma_2}\Phi(x, y)\varphi_{2}(y)\mathrm{d}s(y),
	\quad x\in \overline \Omega, \label{singlelayer2}
\end{align}
where $\varphi_{1}\in L^2(\Gamma_1),\ \varphi_{2}\in L^2(\Gamma_2)$ are unknown density functions.
	
	To determine the density functions, we introduce the operators $\mathcal{S}_{m}, \mathcal{K}_{m}: L^2(\Gamma_m)\to L^2(\Gamma), m=1,2$,
\begin{align*}
&	(\mathcal{S}_{m}\psi_m)(x):=\int_{\Gamma_m}\Phi(x, y)\psi_m(y)\mathrm{d}s(y),\quad x\in \Gamma,\\
&	(\mathcal{K}_{m}\psi_m)(x):=\frac{\partial}{\partial n(x)}\int_{\Gamma_m}\Phi(x, y)\psi_m(y)\mathrm{d}s(y),\quad x\in \Gamma,
\end{align*}
where $\psi_m\in L^2(\Gamma_m)$, $m=1,2$.
	
	Now, by using the total fields $u_\Gamma=u(x; S)$, $\partial_nu_\Gamma=\partial_{n} u(x; S)$, $x\in\Gamma$, we derive the following equations for $\varphi_1$ and $\varphi_2$,
\begin{align*}
		\mathcal{S}_{1}\varphi_{1}+\mathcal{S}_{2}\varphi_{2} & =u_\Gamma,\quad \mathrm{on}\ \Gamma, \\
		\mathcal{K}_{1}\varphi_{1}+\mathcal{K}_{2}\varphi_{2} & =\partial_n u_\Gamma,\quad\mathrm{on}\ \Gamma,
\end{align*}
or equivalently,
\begin{equation}\label{BIE}
		\mathcal{T}\boldsymbol{\varphi}=\boldsymbol{u},
\end{equation}
where $\boldsymbol{\varphi}=(\varphi_1,\varphi_2)^\top $, $\boldsymbol{u}=(u_\Gamma, \partial_n u_\Gamma)^\top$, the  operator $\mathcal{T}: L^2(\Gamma_1)\times L^2(\Gamma_2)\to L^2(\Gamma)\times L^2(\Gamma)$ is defined by
\begin{equation}\label{operatorS}
		\mathcal{T} :=
		\begin{bmatrix}
			\mathcal{S}_{1} &  \mathcal{S}_{2} \\
			\mathcal{K}_{1} &  \mathcal{K}_{2}
		\end{bmatrix}.
\end{equation}

	The following theorem indicates that the operator $\mathcal{T}$ is compact, hence the operator equation \eqref{BIE} is ill-posed and should be solved by a regularization method.
	
\begin{theorem} \label{Theorem3.1}
	The operator $\mathcal{T}$, defined by \eqref{operatorS}, is compact, injective and has dense range.
\end{theorem}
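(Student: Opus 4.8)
The statement has three parts—compactness, injectivity, and dense range—so I would prove each separately. The plan is to treat compactness first (the easiest), then dense range (via a duality/adjoint argument), and finally injectivity, which I expect to be the main obstacle since it is where the geometric placement of $\Gamma_1$, $\Gamma_2$, and $\Gamma$ and the eigenvalue assumptions come into play.

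\textbf{Compactness.} First I would observe that $\mathcal{T}$ is a $2\times 2$ matrix of operators, so it suffices to show each entry $\mathcal{S}_m,\mathcal{K}_m:L^2(\Gamma_m)\to L^2(\Gamma)$ is compact. Since the curves $\Gamma_m$ and $\Gamma$ are disjoint (indeed $\Gamma_1,\Gamma_2$ lie strictly inside/outside $\Omega$ while $\Gamma=\partial\Omega$), the kernels $\Phi(x,y)$ and $\partial_{n(x)}\Phi(x,y)$ are smooth—real-analytic, in fact—for $x\in\Gamma$, $y\in\Gamma_m$, with no singularity to worry about. Each operator is therefore an integral operator with a continuous (smooth) kernel on a bounded domain, hence Hilbert--Schmidt and a fortiori compact. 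A finite matrix of compact operators is compact, so $\mathcal{T}$ is compact.

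\textbf{Injectivity.} This is the crux. Suppose $\mathcal{T}\boldsymbol{\varphi}=0$, i.e. the single-layer potentials $w_1:=\mathcal{S}_{\Gamma_1}\varphi_1$ and $w_2:=\mathcal{S}_{\Gamma_2}\varphi_2$ (extended off $\Gamma$ as Helmholtz solutions) satisfy $w_1+w_2=0$ and $\partial_n(w_1+w_2)=0$ on $\Gamma$. The idea is to set $w:=w_1+w_2$ and note that $w$ has vanishing Cauchy data on $\Gamma$; since $w$ solves the Helmholtz equation in a neighborhood of $\Gamma$, Holmgren's theorem (or the unique continuation already invoked in \cref{Thm2.1}) forces $w\equiv0$ in the connected region between the curves where it is defined. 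The plan is then to exploit the different radiation/regularity behavior of $w_1$ and $w_2$: the potential with density on $\Gamma_1$ radiates outward and is analytic outside $\overline{\Omega}_1$, whereas the potential on $\Gamma_2$ is analytic inside $\Omega_2$. Propagating $w=0$ inward and outward respectively, and using that each $w_m$ is a single-layer potential, I would conclude each $w_m$ vanishes on $\Gamma_m$ from one side; the jump relations for the single-layer potential then show the traces vanish from both sides. At this point the assumption that $k^2$ is \emph{not} a Dirichlet eigenvalue in $\Omega_1$ (resp. $\Omega_2$) is essential: a single-layer potential with vanishing boundary trace and no interior eigenvalue obstruction must vanish identically, whence (again by the jump relation for the normal derivative) the density $\varphi_m=0$. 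The hard part is bookkeeping the domains of analyticity and correctly chaining the unique-continuation steps across the three nested curves.

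\textbf{Dense range.} Finally, for dense range I would show the adjoint $\mathcal{T}^*$ is injective and invoke the standard relation $\overline{\mathrm{range}(\mathcal{T})}=\ker(\mathcal{T}^*)^{\perp}$. Computing $\mathcal{T}^*:L^2(\Gamma)\times L^2(\Gamma)\to L^2(\Gamma_1)\times L^2(\Gamma_2)$ gives adjoint integral operators with the conjugated kernels, whose injectivity reduces to the statement that if a combined potential defined by densities on $\Gamma$ has vanishing traces on both $\Gamma_1$ and $\Gamma_2$, then those densities vanish—an argument structurally identical to the injectivity proof, again using the non-eigenvalue assumptions and unique continuation. I expect this to follow the injectivity argument almost verbatim, so most of the real work lives in that single step.
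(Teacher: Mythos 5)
Your proposal follows the paper's proof essentially step for step: the disjointness of the curves makes every kernel smooth and hence every block compact; injectivity comes from Holmgren's theorem applied to the vanishing Cauchy data on $\Gamma$, followed by propagating the vanishing of $V_1+V_2$ across the nested domains and peeling off each density with the jump relations; and dense range is reduced to injectivity of $\mathcal{T}^*$ via $\overline{\mathrm{range}(\mathcal{T})}=\ker(\mathcal{T}^*)^{\perp}$, argued the same way. The only detail to adjust is that the density on $\Gamma_2$ is eliminated using the uniqueness of the exterior radiating Dirichlet problem (Sommerfeld condition), not an interior non-eigenvalue assumption, which is needed only for $\Omega_1$ in this argument.
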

\begin{proof}
		The operator $\mathcal{T}$ is compact due to the compactness of the operators $\mathcal{S}_{m}, \mathcal{K}_{m}$, $m=1,2$.
		
		We next consider the injectivity of $\mathcal{T}$. Let $\mathcal{T}\boldsymbol{\psi}=\boldsymbol{0}$, where $\boldsymbol{\psi}=(\psi_1,\psi_2)^\top\in L^2(\Gamma_1)\times L^2(\Gamma_2)$.
		Then
		$$
		V_m(x):=\int_{\Gamma_m}\Phi(x, y)\psi_m(y)\mathrm{d}s(y),\quad x\in\mathbb R^2\backslash \Gamma_m,\  m=1,2,
		$$
		satisfy
		\begin{align*}
				V_1+V_2  =0,\quad 
			\partial_n\left( V_1+  V_2\right)=0,\quad \mathrm{on}\  \Gamma.
		\end{align*}
	By using the Holmgren’s principle \cite[Theorem 2.3]{CK19}, we have
		$V_1+V_2=0$ in $\Omega_2\backslash \Omega_1$. Further, we have
		\begin{align*}
			\Delta(V_1+V_2)+k^2(V_1+V_2) & =0,\quad \mathrm{in}\ \Omega_1,\\
			V_1+V_2 & =0,\quad \mathrm{on}\ \Gamma_1.
		\end{align*}
		By using the assumption that $k^2$ is not a Dirichlet eigenvalue for the negative Laplacian in $B_1$, we know $V_1+V_2=0$ in $\Omega_1$, which, together with the jump relations \cite[Theorem 3.1]{CK19}, yields $\psi_1=0$. Therefore, we have $V_2=0$ in $\Omega_2$ and further the potential function $V_2$ satisfies
		\begin{align*}
			\Delta V_2+k^2V_2 & =0,\quad \mathrm{in}\ \mathbb R^2\backslash\overline \Omega_2,\\
			V_2 & =0,\quad \mathrm{on}\ \Gamma_2,
		\end{align*}
		and
		$$
		\lim_{r=|x|\to\infty}\sqrt r\left(\frac{\partial V_2}{\partial r}-\mathrm{i}kV_2\right)=0.
		$$
		And the uniqueness of the exterior scattering problem implies that $V_2=0$ in $\mathbb R^2\backslash\overline \Omega_2$. Again by using the jump relations \cite[Theorem 3.1]{CK19}, we see $\psi_2=0$. Hence the operator $\mathcal{T}$ is injective.
		
		Let $ \mathcal{T}^*\bm{g}=\bm{0}$, where $ \bm{g}=(g_1,g_2)^\top\in L^2(\Gamma)\times L^2(\Gamma)$. Introduce the following potential functions	
		\begin{align*}
		&V^*(y):=\int_{\Gamma}\Phi(x, y)\overline{g_1(x)}\,\mathrm{d}s(x),\quad y\in\mathbb R^2\backslash \Gamma,\\
		&D^*(y):=\int_{\Gamma}\frac{\partial \Phi(x, y)}{\partial n(y)}\overline{g_2(x)}\,\mathrm{d}s(x),\quad y\in\mathbb R^2\backslash \Gamma.
		\end{align*}
		Then, on one hand, it is ready to see that
		\begin{align*}
			\Delta(V^*+D^*)+k^2(V^*+D^*) & =0,\quad \mathrm{in}\ \Omega_1,\\
			V^*+D^* & =0,\quad \mathrm{on}\ \Gamma_1.
		\end{align*}
		By the assumption $k^2$ is not a Dirichlet eigenvalue for the negative Laplacian in $\Omega_1$, we see $V^*+D^*=0$ in $\Omega_1$. Then, from the analyticity of $V^*$ and $D^*$, we know $V^*+D^*=0$ in $\Omega$.
		On the other hand, we see that
		\begin{align*}
			\Delta(V^*+D^*)+k^2(V^*+D^*) & =0,\quad \mathrm{in}\ \mathbb R^2\backslash\overline \Omega_2,\\
			V^*+D^* & =0,\quad \mathrm{on}\  \Gamma_2,\\
			\lim_{r=|x|\to\infty}\sqrt r\left(\frac{\partial (V^*+D^*)}{\partial r}-\mathrm{i}k(V^*+D^*)\right) & =0.
		\end{align*}
		By the uniqueness of the solution of the exterior scattering problem, we have $V^*+D^*=0$ in $\mathbb R^2\backslash\overline \Omega_2$. Then the analyticity of $V^*$ and $D^*$  implies $V^*+D^*=0$ in $\mathbb R^2\backslash\overline \Omega$. Therefore, we derive that
		\begin{align*}
			V^*+D^* =0,\quad \mathrm{in}\  \mathbb R^2\backslash \Gamma.
		\end{align*}
		 Again, from the jump relations \cite[Theorem 3.1 ]{CK19}, we obtain $g_1=0, g_2=0$. Hence the operator $\mathcal{T}^*$ is injective and by \cite[ Theorem 4.6]{CK19} the range of $\mathcal{T}$ is dense in $L^2(\Gamma)\times L^2(\Gamma)$.
	\end{proof}
	
	Due to the ill-posedness, we need to consider the perturbed equation
	\begin{equation}\label{BIEP}
		\mathcal{T}\boldsymbol{\varphi}^\delta=\boldsymbol{u}^\delta,
	\end{equation}
	where $\boldsymbol{u}^\delta\in L^2(\Gamma)\times L^2(\Gamma)$ are measured noisy data satisfying $\| \boldsymbol{u}-\boldsymbol{u}^{\delta} \|_{L^2(\Gamma)\times L^2(\Gamma)} \le \delta$ with $0<\delta<1$.
	
	Seeking for a (Tikhonov) regularized solution to equation \eqref{BIEP} is equivalent to the problem of finding the solution to the following equation:
	\begin{equation}\label{Tikhonov}
		\alpha\bm{\varphi}^{\alpha,\delta}+\mathcal{T}^*\mathcal{T}\bm{\varphi}^{\alpha,\delta}=\mathcal{T}^*\bm{u}^\delta,
	\end{equation}
   	where the adjoint operator $\mathcal{T}^*: L^2(\Gamma)\times L^2(\Gamma)\to  L^2(\Gamma_1)\times L^2(\Gamma_2)$ is defined by
   \begin{equation*}
   	\mathcal{T^*} :=
   	\begin{bmatrix}
   		\mathcal{S}^*_{1} &  \mathcal{K}^*_{1} \\
   		\mathcal{S}^*_{2} &  \mathcal{K}^*_{2}
   	\end{bmatrix},
   \end{equation*}
   with $\mathcal{S}^*_{m}, \mathcal{K}^*_{m}: L^2(\Gamma)\to L^2(\Gamma_m), m=1,2$, given by
    \begin{align*}
    &&(\mathcal{S}^*_{m}\phi)(x):=\int_{\Gamma}\overline{\Phi(x, y)}\phi(y)\mathrm{d}s(y),\quad
    x\in \Gamma_m,\quad \phi\in L^2(\Gamma), \\
    &&(\mathcal{K}^*_{m}\psi)(x):=\int_{\Gamma}\overline{ \frac{\partial\Phi(x, y)}{\partial n(y)}}\psi(y)\mathrm{d}s(y),\quad
    x\in \Gamma_m, \quad \psi\in L^2(\Gamma). 
    \end{align*}

The regularized solution to \eqref{BIEP} is the unique minimum of the Tikhonov functional
\begin{equation*}
		J_{\alpha}(\boldsymbol{\varphi}):=\left\| \mathcal{T}\boldsymbol{\varphi}-\bm{u}^\delta \right\|^2_{L^2(\Gamma)\times L^2(\Gamma)}+\alpha\left\| \boldsymbol{\varphi}\right \|^2_{L^2(\Gamma_1)\times L^2(\Gamma_2)}.
\end{equation*}
	In this paper, the regularization parameter $\alpha=\alpha(\delta)>0$ is chosen by the Morozov's discrepancy principle, and we obtain the following regularized approximation on the field $v(x; S)$ and the incident field $u^i(x; S_2)$,
	\begin{align}
		& v_{\alpha(\delta),\delta}(x; S)  = \int_{\Gamma_1}\Phi(x, y)\varphi^{\alpha(\delta),\delta}_{1}(y)\mathrm{d}s(y),
		\quad x\in \mathbb R^2\backslash\overline \Omega_1, \label{Tikhonov_us}\\
		& u^i_{\alpha(\delta),\delta}(x; S_2)  = \int_{\Gamma_2}\Phi(x, y)\varphi^{\alpha(\delta),\delta}_{ 2}(y)\mathrm{d}s(y),
		\quad x\in \Omega_2. \label{Tikhonov_ui}
	\end{align}
	
\subsection{Stability of the decomposition}

In this subsection, we will give the error estimates of the decomposition. To this aim, we first introduce some single-layer operators $\mathcal{S}_m:L^2(\Gamma_m)\to L^2(\Sigma_m)$, $ m=1,2$,
	$$
	(\mathcal{S}_m\psi_m)(x) =\int_{\Gamma_m}\Phi(x,y)\psi_m(y)\mathrm{d}s(y),\quad x\in \Sigma_m,
	$$
	where $\Sigma_m=\partial \Omega_{\Sigma_m}$ and $\Omega_1\subseteq \Omega_{\Sigma_m} \subseteq \Omega_2$.
	
	Since $k^2$ is not a Dirichlet eigenvalue for the negative Laplacian in $B_1$, this following lemma is a direct result \cite[Theorem 5.21]{CK19}.
	\begin{lemma}\label{lemma1}
		The single-layer operator $\mathcal{S}_1$ is injective and has dense range.
	\end{lemma}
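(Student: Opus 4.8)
The plan is to recognize \cref{lemma1} as a special case of the standard theory of single-layer operators, which the authors explicitly invoke via \cite[Theorem 5.21]{CK19}. The operator $\mathcal{S}_1$ maps a density on $\Gamma_1$ to the trace on $\Sigma_1$ of the single-layer potential, and the relevant structural fact is that the single-layer potential with $L^2$ density generates a radiating solution of the Helmholtz equation whose behavior is controlled by the interior Dirichlet problem in $\Omega_1$. Since the hypothesis that $k^2$ is not a Dirichlet eigenvalue for the negative Laplacian in $\Omega_1$ (written $B_1$ in the text) is precisely the condition under which the single-layer operator is an isomorphism onto the appropriate trace space, both injectivity and the dense range follow once this spectral assumption is in place.

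First I would establish injectivity. Suppose $\mathcal{S}_1\psi=0$ on $\Sigma_1$ for some density $\psi\in L^2(\Gamma_1)$, and set
\begin{equation*}
  V(x)=\int_{\Gamma_1}\Phi(x,y)\psi(y)\,\mathrm{d}s(y),\quad x\in\mathbb{R}^2\setminus\Gamma_1.
\end{equation*}
Then $V$ vanishes on $\Sigma_1$, and since $V$ solves the Helmholtz equation in the annular region between $\Gamma_1$ and $\Sigma_1$ (or, when $\Sigma_1=\Gamma_1$, one argues directly), analytic continuation together with Holmgren's principle forces $V$ to vanish throughout the exterior or the relevant subdomain. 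Propagating this to $\Gamma_1$ shows $V=0$ on $\Gamma_1$ from outside; combining with the radiation condition and the uniqueness of the exterior problem gives $V\equiv 0$ in the unbounded component. Because $k^2$ is not a Dirichlet eigenvalue in $\Omega_1$, the interior problem with zero boundary data likewise yields $V\equiv 0$ inside $\Omega_1$. The jump relations for the normal derivative of the single-layer potential (as used repeatedly in the proof of \cref{Theorem3.1}) then give $\psi=0$, establishing injectivity. This part mirrors exactly the argument already carried out for $\mathcal{T}$, so I would simply cite that reasoning.

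Next I would treat the dense range. The cleanest route is to pass to the adjoint: $\mathcal{S}_1$ has dense range in $L^2(\Sigma_1)$ if and only if its adjoint $\mathcal{S}_1^*$ is injective. The adjoint is again a single-layer-type operator with kernel $\overline{\Phi}$, and its injectivity reduces, by the same combination of the non-eigenvalue assumption, analyticity, and the exterior uniqueness theorem, to showing that a potential vanishing on the dual boundary must vanish identically, forcing the density to be zero via the jump relations. This is precisely the content of the classical result \cite[Theorem 5.21]{CK19}, so in the write-up I would state that \cref{lemma1} is an immediate consequence of that theorem under the stated spectral hypothesis, rather than reproving it from scratch.

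The only genuine subtlety — and hence the step I would watch most carefully — is the role of the intermediate surface $\Sigma_m$ with $\Omega_1\subseteq\Omega_{\Sigma_m}\subseteq\Omega_2$, since the mapping property of $\mathcal{S}_1$ into $L^2(\Sigma_1)$ rather than into $L^2(\Gamma_1)$ means the cited theorem must be applied on the correct pair of surfaces and the density argument must account for the analytic continuation of the potential from $\Sigma_1$ back to $\Gamma_1$. Provided $\Sigma_1$ and $\Gamma_1$ bound a region in which $V$ is a genuine solution of the Helmholtz equation, analyticity bridges the gap and no new ideas are required; the main obstacle is therefore purely bookkeeping about which domain each uniqueness statement lives on, not any analytic difficulty.
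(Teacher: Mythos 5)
Your proposal coincides with the paper's treatment: the paper offers no proof beyond the remark that, since $k^2$ is not a Dirichlet eigenvalue for the negative Laplacian in $\Omega_1$, the lemma is a direct consequence of \cite[Theorem 5.21]{CK19}, which is exactly your bottom line, and your supplementary sketch (injectivity via the potential argument, dense range via injectivity of the adjoint) is the standard underlying argument. The only minor imprecision is in the ordering of your injectivity sketch: since only Dirichlet data $V=0$ is available on $\Sigma_1$, Holmgren's principle does not apply there; one should first invoke uniqueness for the exterior Dirichlet problem outside $\Sigma_1$ together with the radiation condition, then continue $V$ analytically inward to $\Gamma_1$, and only then use the non-eigenvalue assumption and the jump relations.
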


   Following the proof of  Theorem \ref{Theorem3.1}, we readily derive the following result and the proof is omitted.
	\begin{lemma}\label{lemma2}
		The single-layer operator $\mathcal{S}_2$ is injective and has dense range provided that $k^2$ is not a Dirichlet eigenvalue for the negative Laplacian in $\Omega_{\Sigma_2}$.
	\end{lemma}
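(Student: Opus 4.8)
The plan is to follow the template of Theorem \ref{Theorem3.1}, specialized to the single operator $\mathcal{S}_2$, treating injectivity and dense range separately. The one structural feature to keep in mind is the ``reversed'' geometry: the density lives on the outer curve $\Gamma_2=\partial\Omega_2$ while the image is evaluated on the inner curve $\Sigma_2=\partial\Omega_{\Sigma_2}$ with $\Omega_{\Sigma_2}\subseteq\Omega_2$. This is precisely why the non-eigenvalue hypothesis must be placed on $\Omega_{\Sigma_2}$ rather than on $\Omega_2$, and it is the reason Lemma \ref{lemma1} could be quoted directly from \cite[Theorem 5.21]{CK19} whereas the present statement needs the argument of Theorem \ref{Theorem3.1}.

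For injectivity, I would assume $\mathcal{S}_2\psi_2=0$ and introduce the single-layer potential $V(x)=\int_{\Gamma_2}\Phi(x,y)\psi_2(y)\,\mathrm{d}s(y)$ for $x\in\mathbb R^2\backslash\Gamma_2$, which is real-analytic off $\Gamma_2$, solves the Helmholtz equation on each side, and satisfies the Sommerfeld radiation condition. The hypothesis gives $V=0$ on $\Sigma_2$; since $\Gamma_2$ lies outside $\overline{\Omega_{\Sigma_2}}$, $V$ solves the Helmholtz equation in $\Omega_{\Sigma_2}$ with vanishing Dirichlet trace on $\Sigma_2$, so the non-eigenvalue assumption forces $V=0$ in $\Omega_{\Sigma_2}$. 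Analyticity in the connected region $\Omega_2$ then propagates this to $V=0$ throughout $\Omega_2$, and the continuity of the single-layer potential across $\Gamma_2$ transfers the vanishing trace to the exterior side. Uniqueness for the exterior Dirichlet (scattering) problem yields $V=0$ in $\mathbb R^2\backslash\overline{\Omega_2}$, whence both one-sided normal derivatives vanish and the jump relation \cite[Theorem 3.1]{CK19} gives $\psi_2=0$.

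For the dense range, I would argue as in Theorem \ref{Theorem3.1} that it suffices to establish injectivity of the adjoint $\mathcal{S}_2^*$, after which \cite[Theorem 4.6]{CK19} delivers density. Assuming $\mathcal{S}_2^*\phi=0$, I would form the conjugated single-layer potential $W(y)=\int_{\Sigma_2}\overline{\Phi(x,y)}\phi(x)\,\mathrm{d}s(x)$, now with density on $\Sigma_2$; because $k$ is real, $\overline{\Phi}$ still solves the Helmholtz equation, and $W=0$ on $\Gamma_2$. Running the previous chain in the opposite order---exterior uniqueness, applied to $\overline{W}$ so that the standard outgoing condition is recovered, on $\mathbb R^2\backslash\overline{\Omega_2}$, then analytic continuation across the annulus to the whole exterior of $\Sigma_2$, followed by the interior non-eigenvalue condition on $\Omega_{\Sigma_2}$---shows $W=0$ on both sides of $\Sigma_2$, and the jump relation \cite[Theorem 3.1]{CK19} forces $\phi=0$.

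The routine parts are the continuity and jump relations together with the two uniqueness invocations. The step I would watch most carefully is the analytic continuation that bridges the subdomain where vanishing is first obtained ($\Omega_{\Sigma_2}$, respectively $\mathbb R^2\backslash\overline{\Omega_2}$) to the full region bounded by the source curve, since this is exactly where the interplay between the two nested curves and the placement of the eigenvalue hypothesis is essential. In the adjoint argument I would also verify explicitly that conjugating $W$ converts the incoming radiation condition into the usual Sommerfeld condition, so that the exterior uniqueness theorem genuinely applies; the degenerate case $\Omega_{\Sigma_2}=\Omega_2$ (where $\Sigma_2=\Gamma_2$) reduces to the classical single-layer boundary operator and is covered by \cite[Theorem 5.21]{CK19}.
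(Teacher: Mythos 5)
Your proposal is correct and takes essentially the same route as the paper, which omits the proof entirely and simply states that the lemma follows the argument of Theorem \ref{Theorem3.1}; your write-up is a faithful instantiation of that template (potential vanishing on $\Sigma_2$, non-eigenvalue condition in $\Omega_{\Sigma_2}$, analytic continuation to $\Omega_2$, exterior uniqueness, jump relations, and the conjugated adjoint potential for dense range). Your explicit attention to the reversed geometry and to restoring the outgoing radiation condition by conjugation supplies exactly the details the paper leaves implicit.
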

	
	By using Lemma \ref{lemma1} and Lemma \ref{lemma2}, it is readily seen that for a sufficiently small positive constant $\varepsilon_0$  $(0<\varepsilon_0\ll 1)$, there exist density functions $\varphi_{1}^*\in L^2(\Gamma_1)$ and $\varphi_{2}^*\in L^2(\Gamma_2)$, such that
	\begin{align*}
		\left\|\mathcal{S}_1\varphi_{1}^*-v(\ \cdot\ ;S)\right\|_{L^2(\Sigma_1)} & <\varepsilon_0,\\
		\left\|\mathcal{S}_2\varphi_{2}^*-u^i(\ \cdot\ ;S_2)\right\|_{L^2(\Sigma_2)} & <\varepsilon_0,
	\end{align*}
where  $\Omega_1\subset \Omega_{\Sigma_1}\subset \Omega$, $\Omega\subset \Omega_{\Sigma_2}\subset B_2$, $\Omega_{\Sigma_2}\cap S_2=\emptyset$, and  $k^2$ is not a Dirichlet eigenvalue for the negative Laplacian in $\Omega_{\Sigma_\ell}$, $\ell =1,2$.
	Denote $\bm{\varphi}^*=(\varphi_{1}^*,\varphi_{2}^*)^\top$, and then, from continuous dependence of solutions on the boundary value and the regularity estimate, we obtain
	\begin{equation}\label{BIEAppro}
		\mathcal{T}\bm{\varphi}^*=\bm{u}^*,
	\end{equation}
	where
	\begin{equation}\label{varepsilon}
		\left	\|\boldsymbol{u}^*-\boldsymbol{u} \right\|_{L^2(\Gamma)\times L^2(\Gamma)}<C_0\varepsilon_0=:\varepsilon,
	\end{equation}
    with $C_0>0$.
	
	In the following, we present the main result of the error estimates.
	
	\begin{theorem}\label{Theorem3.2}
		Let $\delta+\varepsilon \le \left\| \boldsymbol{u}^{\delta} \right\|_{L^2(\Gamma)\times L^2(\Gamma)}$ with $\varepsilon$ being defined in \eqref{varepsilon}, the regularized solution $\boldsymbol{\varphi}^{\alpha(\delta),\delta} $ of \eqref{Tikhonov} satisfy $\left\|\mathcal{T}\boldsymbol{\varphi}^{\alpha(\delta),\delta}-\boldsymbol{u}^{\delta}\right\|_{L^2(\Gamma)\times L^2(\Gamma)}=\delta+\varepsilon$, $\delta\in(0,\delta_0)$. Then
		
		\noindent(a) There exists a function $\boldsymbol{h}\in L^2(\Gamma)\times L^2(\Gamma)$ such that
		$$
		\left\| \boldsymbol{\varphi}^*-\mathcal{T}^*\boldsymbol{h}\right \|_{L^2(\Gamma_1)\times L^2(\Gamma_2)}<\varepsilon ;
		$$
		\noindent(b) Let $ \| \boldsymbol{h} \|_{L^2(\Gamma)\times L^2(\Gamma)} \le E $, then
		$$
		\left\| \boldsymbol{\varphi}^{\alpha(\delta),\delta}-\boldsymbol{\varphi}^* \right\|_{L^2(\Gamma_1)\times L^2(\Gamma_2)} \le 2\varepsilon+2\sqrt{(\delta+\varepsilon)E} .
		$$
	\end{theorem}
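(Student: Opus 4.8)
The plan is to treat part (a) as a pure density statement and part (b) as the classical error analysis for Tikhonov regularization steered by Morozov's discrepancy principle, with the source representation from (a) supplying the smoothness needed for a quantitative rate.

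For part (a) I would argue by functional-analytic duality. Since \cref{Theorem3.1} establishes that $\mathcal{T}$ is injective, the standard relation $\overline{\mathrm{Range}(\mathcal{T}^*)}=\mathrm{Null}(\mathcal{T})^\perp$ forces $\mathrm{Range}(\mathcal{T}^*)$ to be dense in $L^2(\Gamma_1)\times L^2(\Gamma_2)$. Hence the fixed element $\boldsymbol{\varphi}^*$ can be approximated to within the prescribed tolerance $\varepsilon$ by an element of that range, which is exactly the claim that some $\boldsymbol{h}$ satisfies $\|\boldsymbol{\varphi}^*-\mathcal{T}^*\boldsymbol{h}\|<\varepsilon$. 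This step is short and carries no real difficulty.

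For part (b) I would first exploit the minimizing property $J_\alpha(\boldsymbol{\varphi}^{\alpha(\delta),\delta})\le J_\alpha(\boldsymbol{\varphi}^*)$. On the left I insert the discrepancy identity $\|\mathcal{T}\boldsymbol{\varphi}^{\alpha(\delta),\delta}-\boldsymbol{u}^\delta\|=\delta+\varepsilon$, and on the right the bound $\|\mathcal{T}\boldsymbol{\varphi}^*-\boldsymbol{u}^\delta\|=\|\boldsymbol{u}^*-\boldsymbol{u}^\delta\|\le\|\boldsymbol{u}^*-\boldsymbol{u}\|+\|\boldsymbol{u}-\boldsymbol{u}^\delta\|\le\delta+\varepsilon$. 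Both data-fidelity terms are then controlled by the same quantity $(\delta+\varepsilon)^2$, so after cancellation the regularization terms yield the a priori stability bound $\|\boldsymbol{\varphi}^{\alpha(\delta),\delta}\|\le\|\boldsymbol{\varphi}^*\|$. Expanding the squared norm and using this bound gives $\|\boldsymbol{\varphi}^{\alpha(\delta),\delta}-\boldsymbol{\varphi}^*\|^2\le 2\,\mathrm{Re}\langle\boldsymbol{\varphi}^*-\boldsymbol{\varphi}^{\alpha(\delta),\delta},\boldsymbol{\varphi}^*\rangle$.

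Next I would substitute the representation $\boldsymbol{\varphi}^*=\mathcal{T}^*\boldsymbol{h}+\boldsymbol{r}$ from (a), with $\|\boldsymbol{r}\|<\varepsilon$, and split the inner product. The remainder is handled by Cauchy--Schwarz, contributing $2\varepsilon\|\boldsymbol{\varphi}^*-\boldsymbol{\varphi}^{\alpha(\delta),\delta}\|$, while the principal term is pushed onto the data side via the adjoint, $\mathrm{Re}\langle\boldsymbol{\varphi}^*-\boldsymbol{\varphi}^{\alpha(\delta),\delta},\mathcal{T}^*\boldsymbol{h}\rangle=\mathrm{Re}\langle\mathcal{T}(\boldsymbol{\varphi}^*-\boldsymbol{\varphi}^{\alpha(\delta),\delta}),\boldsymbol{h}\rangle$. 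The triangle inequality together with the two discrepancy-level bounds gives $\|\mathcal{T}(\boldsymbol{\varphi}^*-\boldsymbol{\varphi}^{\alpha(\delta),\delta})\|\le\|\boldsymbol{u}^*-\boldsymbol{u}^\delta\|+\|\boldsymbol{u}^\delta-\mathcal{T}\boldsymbol{\varphi}^{\alpha(\delta),\delta}\|\le 2(\delta+\varepsilon)$, so this term is at most $2(\delta+\varepsilon)E$. Collecting everything produces the quadratic inequality
\[
\|\boldsymbol{\varphi}^{\alpha(\delta),\delta}-\boldsymbol{\varphi}^*\|^2\le 2\varepsilon\,\|\boldsymbol{\varphi}^{\alpha(\delta),\delta}-\boldsymbol{\varphi}^*\|+4(\delta+\varepsilon)E,
\]
and solving this scalar inequality for the norm, followed by $\sqrt{x+y}\le\sqrt{x}+\sqrt{y}$, yields precisely $2\varepsilon+2\sqrt{(\delta+\varepsilon)E}$. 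I expect the main obstacle to be the careful bookkeeping in the chain of triangle inequalities that produces the factor $2(\delta+\varepsilon)$, and the correct use of the hypothesis $\delta+\varepsilon\le\|\boldsymbol{u}^\delta\|$, which is needed precisely to guarantee that the discrepancy principle is solvable, i.e.\ that a parameter $\alpha(\delta)$ realizing the discrepancy identity actually exists.
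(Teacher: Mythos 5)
Your proposal is correct and follows essentially the same route as the paper: density of $\mathrm{Range}(\mathcal{T}^*)$ from injectivity of $\mathcal{T}$ for part (a), and for part (b) the minimality of the Tikhonov functional combined with the discrepancy identity to get $\|\boldsymbol{\varphi}^{\alpha(\delta),\delta}\|\le\|\boldsymbol{\varphi}^*\|$, the source-representation splitting of the inner product, the bound $\|\mathcal{T}(\boldsymbol{\varphi}^*-\boldsymbol{\varphi}^{\alpha(\delta),\delta})\|\le 2(\delta+\varepsilon)$, and the resulting quadratic inequality. The only differences are cosmetic (e.g., applying Cauchy--Schwarz after rather than before splitting $\boldsymbol{u}^*-\mathcal{T}\boldsymbol{\varphi}^{\alpha(\delta),\delta}$), and your closing remark on why $\delta+\varepsilon\le\|\boldsymbol{u}^\delta\|$ is needed is a correct observation the paper leaves implicit.
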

	\begin{proof}
		\noindent(a) The range of $ \mathcal{T}^*$ is dense in $L^2(\Gamma_1)\times L^2(\Gamma_2)$, since $\mathcal{T}: L^2(\Gamma_1)\times L^2(\Gamma_2)\to L^2(\Gamma)\times L^2(\Gamma)$ is compact and injective by Theorem \ref{Theorem3.1}. Therefore, for $\varepsilon$, there exists a function $ \boldsymbol{h} \in L^2(\Gamma)\times L^2(\Gamma)$ such that $\|\boldsymbol{\varphi}^*-\mathcal{T}^*\boldsymbol{h} \|_{L^2(\Gamma_1)\times L^2(\Gamma_2)}< \varepsilon$.
		
		\noindent(b) Let $\boldsymbol{\varphi}^{\delta}:= \boldsymbol{\varphi}^{\alpha(\delta),\delta}$ be the minimum of the Tikhonov functional
		\begin{equation*}
			J^{\delta}(\boldsymbol{\varphi}):=J_{\alpha(\delta),\delta}(\boldsymbol{\varphi})=\left\| \mathcal{T}\boldsymbol{\varphi}-\boldsymbol{u}^{\delta}\right \|^2_{L^2(\Gamma)\times L^2(\Gamma)}
			+\alpha(\delta)\left\| \boldsymbol{\varphi} \right\|^2_{L^2(\Gamma_1)\times L^2(\Gamma_2)}.
		\end{equation*}
		Then, we have
		\begin{align*}
			\quad (\delta+\varepsilon)^2+\alpha(\delta)\left\| \boldsymbol{\varphi}^{\delta}\right \|^2_{L^2(\Gamma_1)\times L^2(\Gamma_2)}
			& =J^{\delta}(\boldsymbol{\varphi}^{\delta}) \le  J^{\delta}(\boldsymbol{\varphi}^*) \\
			& =\left\| \boldsymbol{u}^*-\boldsymbol{u}^{\delta} \right\|^2_{L^2(\Gamma)\times L^2(\Gamma)}
			+\alpha(\delta)\left\| \boldsymbol{\varphi}^* \right\|^2_{L^2(\Gamma_1)\times L^2(\Gamma_2)} \\
			&\le (\delta+\varepsilon)^2+\alpha(\delta)\left\| \boldsymbol{\varphi}^* \right\|^2_{L^2(\Gamma_1)\times L^2(\Gamma_2)},
		\end{align*}
		which implies $\left\|\boldsymbol{\varphi}^{\delta} \right\|_{L^2(\Gamma_1)\times L^2(\Gamma_2)} \le \left\| \boldsymbol{\varphi}^* \right\|_{L^2(\Gamma_1)\times L^2(\Gamma_2)}$ for all $\delta >0$. Hence
		\begin{align*}
			\quad \left\| \boldsymbol{\varphi}^{\delta}-\boldsymbol{\varphi}^* \right\|^2_{L^2(\Gamma_1)\times L^2(\Gamma_2)}
			&=\left\| \boldsymbol{\varphi}^{\delta} \right\|^2_{L^2(\Gamma_1)\times L^2(\Gamma_2)}-2\Re\langle\boldsymbol{\varphi}^{\delta}, \boldsymbol{\varphi}^*\rangle
			+\left\| \boldsymbol{\varphi}^* \right\|^2_{L^2(\Gamma_1)\times L^2(\Gamma_2)} \\
			&\le 2 \left( \left\| \boldsymbol{\varphi}^*\right \|^2_{L^2(\Gamma_1)\times L^2(\Gamma_2)}-\Re\langle\boldsymbol{\varphi}^{\delta}, \boldsymbol{\varphi}^*\rangle \right)\\
			&=2\Re\langle\boldsymbol{\varphi}^*-\boldsymbol{\varphi}^{\delta}, \boldsymbol{\varphi}^*\rangle .
		\end{align*}
	    where $\langle \cdot , \cdot \rangle$ denotes the $L^2$-inner product on $\Gamma_1\times\Gamma_2$.
	
		From (a), let $ \tilde{\bm{\varphi}} =\mathcal{T}^*\boldsymbol{h}_j\in L^2(\Gamma_1)\times L^2(\Gamma_2)$ such that $\left\|\tilde{\boldsymbol{\varphi}}-\boldsymbol{\varphi}^*\right\|_{L^2(\Gamma_1)\times L^2(\Gamma_2)}\le \varepsilon $. Since
		$
		\left\|\boldsymbol{u}^*-\boldsymbol{u}^\delta \right\|_{L^2(\Gamma)\times L^2(\Gamma)} \leq \left\|\boldsymbol{u}^*-\boldsymbol{u} \right\|_{L^2(\Gamma)\times L^2(\Gamma)}+\left\|\boldsymbol{u}-\boldsymbol{u}^\delta \right\|_{L^2(\Gamma)\times L^2(\Gamma)}<\varepsilon+\delta
		$, we obtain
		\begin{align*}
			\left \| \boldsymbol{\varphi}^{\delta}-\boldsymbol{\varphi}^* \right\|^2_{L^2(\Gamma_1)\times L^2(\Gamma_2)}
			&\le 2\Re\langle\boldsymbol{\varphi}^*-\boldsymbol{\varphi}^{\delta}, \boldsymbol{\varphi}^*-\tilde{\boldsymbol{\varphi}}\rangle+2\Re\langle\boldsymbol{\varphi}^*-\boldsymbol{\varphi}^{\delta}, \mathcal{T}^*\boldsymbol{h}\rangle \\
			&\le 2\varepsilon \left\| \boldsymbol{\varphi}^*-\boldsymbol{\varphi}^{\delta} \right\|_{L^2(\Gamma_1)\times L^2(\Gamma_2)}  +2\Re\langle\boldsymbol{u}^*-\mathcal{T}\boldsymbol{\varphi}^{\delta}, \boldsymbol{h}\rangle  \\
			&\le 2\varepsilon\left\| \boldsymbol{\varphi}^*-\boldsymbol{\varphi}^{\delta} \right\|_{L^2(\Gamma_1)\times L^2(\Gamma_2)} + 2\Re\langle\boldsymbol{u}^*-\boldsymbol{u}^{\delta}, \boldsymbol{h}\rangle  \\
			&\quad +2\Re\langle\boldsymbol{u}^{\delta}-\mathcal{T}\boldsymbol{\varphi}^{\delta}, \boldsymbol{h}\rangle \\
			&\le 2\varepsilon\left\| \boldsymbol{\varphi}^*-\boldsymbol{\varphi}^{\delta} \right\|_{L^2(\Gamma_1)\times L^2(\Gamma_2)}+4(\delta+\varepsilon)E.
		\end{align*}
		This means $ \left( \left\| \boldsymbol{\varphi}^*-\boldsymbol{\varphi}^{\delta} \right\|_{L^2(\Gamma_1)\times L^2(\Gamma_2)}-\varepsilon \right)^2 \le \varepsilon^2+4(\delta+\varepsilon)E $, and thus,
		$$
		\left\| \boldsymbol{\varphi}^{\delta}-\boldsymbol{\varphi}^* \right\|_{L^2(\Gamma_1)\times L^2(\Gamma_2)} \le 2\varepsilon+2\sqrt{(\delta+\varepsilon)E} .
		$$
		This completes the proof.
	\end{proof}
	
	From Theorem \ref{Theorem3.2} and $\varepsilon\ll 1$, we obtain the error estimates of the decomposition:
	\begin{align}
		\left\|v_{\alpha(\delta),\delta}(\cdot; S)-v(\cdot; S)\right\|_{L^2(\Gamma)} & \leq   C_1\sqrt{\delta+\varepsilon}, \label{errorus} \\
		\left\|u^i_{\alpha(\delta),\delta}(\cdot; S_2) -u^i(\cdot; S_2)\right\|_{L^2(\Gamma)} & \leq   C_2\sqrt{\delta+\varepsilon}, \label{errorui}
	\end{align}
	where $C_1=C_1(\Gamma)$ and $C_2=C_2(\Gamma)$ are positive constants.

\section{Imaging algorithms}\label{sec:algorithms}
	
Once the two components have been decoded from the measurements of the total field, the co-inversion problem \eqref{co-inversion} can be  decoupled into the following two inverse problems:
	
	\begin{subproblem}\label{subPro1}
			Determine the locations of the source points $S_1$ and $S_2$ from the decoupled data  $v_{\alpha(\delta),\delta}(x;S)$ in  \eqref{Tikhonov_us} and $u^i_{\alpha(\delta),\delta}(x;S_2)$ in\eqref{Tikhonov_ui}, respectively.
	\end{subproblem}

	\begin{subproblem}\label{subPro2}
		Reconstruct the boundary of the obstacle $\partial D$ after \cref{subPro1}.
	\end{subproblem}

The aim of this section is to develop imaging algorithms to solve \cref{subPro1} and \cref{subPro2}, namely,  determine the source locations and then, find the shape of the obstacle. The stability of the numerical methods for imaging the point sources will be analyzed as well. Two direct imaging methods are developed to solve \cref{subPro1} in subsection \ref{subsec:algorithm1}.

\subsection{Direct imaging for locating the point sources}\label{subsec:algorithm1}

In this subsection, two direct sampling methods are proposed for determining the locations of the source points $S_1$ and  $S_2$, respectively.

(i). 	For any sampling point $y\in  \Omega$, we introduce the following indicator function
\begin{equation}\label{indicator1}
	I_1(y)=\frac{1}{\sqrt{R}}\Re \left\{ \left\langle v_{\alpha(\delta),\delta}(\ \cdot \ ;S),\mathrm{e}^{\mathrm{i}(k|\cdot-y|+\frac{\pi}{4})}\right\rangle_{L^2(\Gamma_R)} \right\},
\end{equation}
where $\left\langle \cdot , \cdot \right\rangle$ denotes the $L^2$-inner product, and $\Gamma_R:=\{x\in  \mathbb{R}^2:|x|=R\}$ such that $R\gg 1$. We take the maximum points of the indicator function $I_1(y)$ as the approximation of the exact source points of $S_1$.

\begin{theorem}\label{ThmIndicator}	
	For all $y\in \Omega$, we have
	\begin{align*}
		I_1(y)&= \frac{1}{2\sqrt{2\pi k R}}\sum\limits_{z_j\in S_1}\int_{\Gamma_R}\frac{\cos(k(|x-z_j|-|x-y|))}{\sqrt{|x-z_j|}}\mathrm{d}s(x) \\
		&\quad +\mathcal{O}\left(R^{-\frac{1}{2}}+d_0^{-\frac{1}{2}}\right)+\mathcal{O}\left((\delta+\varepsilon)^{\frac{1}{2}}\right),
	\end{align*}
where $d_0=\inf\limits_{z\in S_1,\, x\in\partial D}|z-x|$.
\end{theorem}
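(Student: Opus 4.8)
The plan is to substitute the field decompositions directly into \eqref{indicator1} and separate the leading oscillatory contribution from three controllable remainders. Writing $v_{\alpha(\delta),\delta}(\cdot;S)=v(\cdot;S)+\big(v_{\alpha(\delta),\delta}(\cdot;S)-v(\cdot;S)\big)$ together with $v(\cdot;S)=u^i(\cdot;S_1)+u^s(\cdot;S)$, linearity of the inner product gives
\begin{equation*}
I_1(y)=\frac{1}{\sqrt R}\Re\big\langle u^i(\cdot;S_1),\mathrm{e}^{\mathrm{i}(k|\cdot-y|+\frac{\pi}{4})}\big\rangle_{L^2(\Gamma_R)}+\frac{1}{\sqrt R}\Re\big\langle u^s(\cdot;S),\mathrm{e}^{\mathrm{i}(k|\cdot-y|+\frac{\pi}{4})}\big\rangle_{L^2(\Gamma_R)}+E_\delta,
\end{equation*}
where $E_\delta$ gathers the regularization/decoupling error. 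I would show that the first term yields the stated principal sum, while the remaining pieces are $\mathcal{O}(R^{-1/2})$, $\mathcal{O}(d_0^{-1/2})$ and $\mathcal{O}((\delta+\varepsilon)^{1/2})$, respectively.

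For the principal term, insert the large-argument expansion $\Phi(x,z_j)=\frac{1}{2\sqrt{2\pi k|x-z_j|}}\mathrm{e}^{\mathrm{i}(k|x-z_j|+\frac{\pi}{4})}+\mathcal{O}(|x-z_j|^{-3/2})$ coming from the asymptotics of $H_0^{(1)}$. Pairing the principal part with $\overline{\mathrm{e}^{\mathrm{i}(k|x-y|+\pi/4)}}$ and taking the real part reproduces \emph{exactly} $\frac{1}{2\sqrt{2\pi kR}}\sum_{z_j\in S_1}\int_{\Gamma_R}\frac{\cos(k(|x-z_j|-|x-y|))}{\sqrt{|x-z_j|}}\,\mathrm{d}s(x)$; since $|x-z_j|\gtrsim R$ on $\Gamma_R$, the $\mathcal{O}(|x-z_j|^{-3/2})$ remainder integrated over $\Gamma_R$ (of length $2\pi R$) and divided by $\sqrt R$ is of order $R^{-1}$, hence $\mathcal{O}(R^{-1/2})$. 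For $E_\delta$, I would use $\|\mathrm{e}^{\mathrm{i}(k|\cdot-y|+\pi/4)}\|_{L^2(\Gamma_R)}=\sqrt{2\pi R}$ and Cauchy--Schwarz to bound $|E_\delta|\le\sqrt{2\pi}\,\|v_{\alpha(\delta),\delta}(\cdot;S)-v(\cdot;S)\|_{L^2(\Gamma_R)}$; because both fields are radiating solutions of the Helmholtz equation in the source-free exterior of $\Omega$, the $L^2(\Gamma)$ estimate \eqref{errorus} transfers to $\Gamma_R$ by continuous dependence on the Dirichlet data, so $|E_\delta|=\mathcal{O}((\delta+\varepsilon)^{1/2})$.

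The main obstacle is the scattered-field term. Again by Cauchy--Schwarz it is dominated by $\sqrt{2\pi}\,\|u^s(\cdot;S)\|_{L^2(\Gamma_R)}$, and the Sommerfeld condition \eqref{eq:Sommerfeld} gives $\|u^s(\cdot;S)\|_{L^2(\Gamma_R)}=\|u^\infty\|_{L^2(\mathbb S^1)}+\mathcal{O}(R^{-1})$, where $u^\infty$ denotes the far-field pattern. It then remains to prove $\|u^\infty\|_{L^2(\mathbb S^1)}=\mathcal{O}(d_0^{-1/2})$. This is where the geometric separation $d_0$ enters quantitatively: by well-posedness of the forward problem \eqref{eq:Helmholtz}--\eqref{eq:Sommerfeld}, the far field depends continuously on the boundary data $\mathscr{B}u^i$ on $\partial D$, while the decay $|\Phi(x,z_j)|,\,|\partial_\nu\Phi(x,z_j)|=\mathcal{O}(|x-z_j|^{-1/2})=\mathcal{O}(d_0^{-1/2})$ for $x\in\partial D$, $z_j\in S_1$ (uniform across the sound-soft and impedance cases) converts the separation into the rate $d_0^{-1/2}$.

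Collecting these estimates completes the proof. The delicate point is precisely the last conversion: it hinges on combining the well-posedness/continuous-dependence estimate for the exterior boundary value problem with the asymptotic decay of $H_0^{(1)}$ and its normal derivative at distance at least $d_0$ from the sources, so that the purely geometric quantity $d_0$ is turned into the explicit algebraic rate appearing in the remainder. The other two remainders, by contrast, are routine consequences of the Hankel asymptotics and of the decoupling error bound \eqref{errorus}.
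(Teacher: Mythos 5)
Your proposal is correct and follows essentially the same route as the paper: the same three-way splitting of $v_{\alpha(\delta),\delta}$ into the incident part over $S_1$, the scattered part, and the decoupling error, with the leading sum extracted from the large-argument asymptotics of $H_0^{(1)}$ and the remainders controlled by Cauchy--Schwarz together with the estimate \eqref{errorus}. The only difference is cosmetic: where the paper simply asserts the pointwise bound $|u^s(x;S)|=\mathcal{O}(R^{-1/2}d_0^{-1/2})$ on $\Gamma_R$, you justify the same $d_0^{-1/2}$ rate by combining well-posedness of the exterior problem with the decay of $\Phi(\cdot,z_j)$ on $\partial D$, which is a useful elaboration rather than a new argument.
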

\begin{proof}
	From (3.105) in \cite{CK19}, we have the following asymptotic behavior of the Hankel functions
	$$
	H_0^{(1)}(t)=\sqrt{\frac{2}{\pi t}}\mathrm{e}^{\mathrm{i}(t-\frac{\pi}{4})}
	\left\{1+\mathcal{O}\left(\frac{1}{t}\right)\right\},\quad t\to\infty.
	$$
	Then, we deduce
	\begin{align*}
		&\quad\Re \left\{ \left\langle u^i(x; S_1),\mathrm{e}^{\mathrm{i}(k|x-y|+\frac{\pi}{4})}\right\rangle_{L^2(\Gamma_R)} \right\} \\
		&=\Re\left\{\sum\limits_{z_j\in S_1}\int_{\Gamma_R}u^i(x; z_j)\ \mathrm{e}^{-\mathrm{i}(k|x-y|+\frac{\pi}{4})}\mathrm{d}s(x) \right\} \\
		&=\Re\left\{\sum\limits_{z_j\in S_1}\int_{\Gamma_R}\frac{\mathrm{i}}{4}H_0^{(1)}(k|x-z_j|)\ \mathrm{e}^{-\mathrm{i}(k|x-y|-\frac{\pi}{4})}\mathrm{e}^{-\mathrm{i}\frac{\pi}{2}}\mathrm{d}s(x) \right\} \\
		&=\frac{1}{4}\Re\left\{\sum\limits_{z_j\in S_1}\int_{\Gamma_R}\sqrt{\frac{2}{\pi k |x-z_j|}}\ \mathrm{e}^{\mathrm{i}k(|x-z_j|-|x-y|)}\mathrm{d}s(x) \right\}+\mathcal{O}\left(R^{-\frac{1}{2}}\right) \\
		&=\frac{\sqrt{2}}{4\sqrt{\pi k}}\sum\limits_{z_j\in S_1}\int_{\Gamma_R}\frac{\cos(k(|x-z_j|-|x-y|))}{\sqrt{|x-z_j|}}\mathrm{d}s(x)+\mathcal{O}\left(R^{-\frac{1}{2}}\right).
	\end{align*}
 	Further, by \eqref{varepsilon}, Theorem \ref{Theorem3.2}, the Schwarz inequality and $|u^s(x; S_1)|=\mathcal{O}(R^{-\frac{1}{2}}d_0^{-\frac{1}{2}})$ for $x\in \Gamma_R$, we have for each $y\in B_1$,
	\begin{align*}
		I_1(y)
		&= \frac{1}{\sqrt{R}}\Re\left\{\int_{\Gamma_R}\left(v_{\alpha(\delta),\delta}(x; S)-v(x; S)\right)
		\mathrm{e}^{-\mathrm{i}\left(k|x-y|+\frac{\pi}{4}\right)}\mathrm{d}s(x) \right.\\
		&\left.  \quad+\int_{\Gamma_R}u^i(x; S_1)\mathrm{e}^{-\mathrm{i}\left(k|x-y|+\frac{\pi}{4}\right)}\mathrm{d}s(x) 
		+\int_{\Gamma_R}u^s(x; S)\mathrm{e}^{-\mathrm{i}\left(k|x-y|+\frac{\pi}{4}\right)}\mathrm{d}s(x)\right\} \\
		&=\mathcal{O}\left((\delta+\varepsilon)^{-\frac{1}{2}}\right)+\frac{\sqrt{2}}{4\sqrt{\pi k R}}\sum\limits_{z_j\in S_1}\int_{\Gamma_R}\frac{\cos(k(|x-z_j|-|x-y|))}{\sqrt{|x-z_j|}}\mathrm{d}s(x)\\
		&\quad +\mathcal{O}\left(R^{-\frac{1}{2}}+d_0^{-\frac{1}{2}}\right).
	\end{align*}
	This completes the proof.
\end{proof}

In virtue of the above theorem, we see that each function $I_1(y)$ should decay as the sampling point $y$ recedes from the corresponding source point $z_j\in S_1$. And thus the source points $S_1$ can be recovered by locating the significant local maximizers of the indicator $I_1(y)$ over the sampling region $\Omega_1$.

(ii). For any sampling point $y\in \Omega_2\backslash \Omega$, we introduce two indicator functions
\begin{align}\label{indicator2}
	I_2(y)= \int_{\Gamma}\left|u^i(x; y)- u_{\alpha(\delta),\delta}^i( x;S_2)\right|\mathrm{d}s(x),
\end{align}
and
\begin{equation}\label{indicator3}
	\widehat{I}_2(y)= {\rm Im } \left\{u_{\alpha(\delta),\delta}^i( y ;S_2)\right\}.
\end{equation}
We take the minimum points of the indicator function $I_2(y)$, or the maximum points of the indicator function $\widehat{I}_2(y)$ as the approximation of the source points of $S_2$. In the following, we will analyze some properties of the indicator functions $I_2(y)$ and $\widehat{I}_2(y)$.

First, consider the indicator function $I_2(y)$. If $S_2$ contains only one source point $z^*$, from \eqref{errorui} and the Schwarz inequality, we see 
\begin{align*}
	I_2(z^*)= \int_{\Gamma}\left| u^i (x; S_2)- u_{\alpha(\delta),\delta}^i(x  ;S_2)\right|\mathrm{d}s(x)=\mathcal{O}\left((\delta+\varepsilon)^{\frac{1}{2}}\right),
\end{align*}
which implies the indicator function $I_2$  reaches the minimum at $z^*$.
For the general case, we have
\begin{align*}
	I_2(y)&\leq\int_{\Gamma}\left| u^i (x; y)- u^i (x; S_2)\right|\mathrm{d}s(x)
		+ \int_{\Gamma}\left| u^i (x; S_2)- u_{\alpha(\delta),\delta}^i(x  ;S_2)\right|\mathrm{d}s(x)	\\
	&\leq \int_{\Gamma}\left| u^i (x; y)- u^i (x; z_l)\right|\mathrm{d}s(x)+ \int_{\Gamma}\left| \sum\limits_{z_j\in S_2, j\neq l}u^i(x; z_j)  \right|\mathrm{d}s(x)
	+\mathcal{O}\left((\delta+\varepsilon)^{\frac{1}{2}}\right).
\end{align*}
The right hand of the inequality attains the minimum at $z_l$. This, to some extent, shows the indicator function $I_2(y)$ takes minimum at $z_j\in S_2$. 

Now, we turn to the property of the indicator function $\widehat{I}_2(y)$. From \eqref{pointsource}, we have
\begin{equation*} 
	u^i (x; z)=\frac{\mathrm{i}}{4} J_0(k|x-z|)-\frac{\mathrm{1}}{4}Y_0(k|x-z|),\quad x\in  \mathbb{R}^2\backslash {\{z\}},
\end{equation*}
where $J_0$ and $Y_0$ are the Bessel functions of the first and second kind of order zero, respectively. From 
\begin{equation*} 
	u^i (x; z)-\overline{u^i (x; z)}=\frac{\mathrm{i}}{2} J_0(k|x-z|),\quad x\in  \mathbb{R}^2,
\end{equation*}
we obtain
\begin{equation} \label{DefinitionJ}
{\rm Im}\left\{	u^i (x; S_2)-\overline{u^i (x; S_2)}\right\}=\frac{1}{2} \sum\limits_{z_j\in S_2}J_0(k|x-z_j|)\triangleq \mathcal{J}(x;S_2),\quad x\in  \mathbb{R}^2.
\end{equation}
Further, by using \eqref{errorui}, we see  
	\begin{align*}
	\left\|{\rm Im}\left \{u^i_{\alpha(\delta),\delta}(\cdot; S_2)- \overline{u^i_{\alpha(\delta),\delta}(\cdot; S_2)}\right\}-\mathcal{J}(\cdot; S_2)\right\|_{L^2(\Gamma)}  \leq  C\sqrt{\delta+\varepsilon},
\end{align*}
where $C>0$ is a constant. This means ${\rm Im}\left\{u^i_{\alpha(\delta),\delta}(x; S_2)- \overline{u^i_{\alpha(\delta),\delta}(x; S_2)}\right\}$ can be an approximation of $\mathcal{J}(x;S_2)$ in $\Omega$, and then in $\Omega_2$.
Now, let $\delta>0$ be a small constant and  $B_{\delta}(z_j)=\{x\in \mathbb{R}^2 :|x-z_j|<\delta \}$. From \eqref{DefinitionJ}, it is readily seen that for $z_j\in S_2$ and $x\in B_{\delta}(z_j)$
\begin{align*} 
 \mathcal{J}(x;S_2)&=\frac{1}{2}J_0(k|x-z_j|)+\frac{1}{2} \sum\limits_{z_l\in S_2, l\neq j}J_0(k|x-z_l|)\\
 &=\frac{1}{2}J_0(k|x-z_j|)+\mathcal{O}\left((k\rho)^{-\frac{1}{2}}\right),
\end{align*}
where $\rho={\rm dist}(z_j,S_2\backslash\{z_j\})-\delta$. This shows that function $\mathcal{J}(x;S_2)$ reaches maximum $1/2$ at $z_j$, which yields that ${\rm Im}\left\{u^i_{\alpha(\delta),\delta}(x; S_2)\right\}$ takes maximum $1/4$ at $z_j$. 

Based on the aforementioned analysis, the source points $S_2$ can be recovered by locating the significant local minimizers of the indicator $I_2(y)$  or the local maximizers of the indicator $\widehat{I}_2(y)$ over the sampling region $\Omega_2\backslash \Omega$.

\subsection{Direct imaging of the obstacle}\label{subsec:algorithm2}
	
In this subsection, we consider the numerical scheme for solving \cref{subPro2}, namely, reconstructing the obstacle $D$ by the direct sampling method with the aid of the artificial source points. The reason for incorporating artificial sources is the lack of information for a good reconstruction of the obstacle because the number and distribution of the known point sources are not at our disposal.  A similar strategy for supplementing information by auxiliary source points can be found in \cite{ZWG22}. 

Let $\cup_{j=1}^M\{z_j^a\}\subset\mathbb{R}^2\backslash(D\cup S)$ be a set of auxiliary source points, which are known and additionally added to the co-inversion problem. For each $j=1,\cdots, M$, the incident field due to the point source located at $z_j^a$ is denoted by $u^i(x; z_j^a)$ and the corresponding scattered wave is given by  $u^s(x; z_j^a)$. We now introduce the new inverse scattering problem of recovering $\partial D$ from the Cauchy data $\mathbb{U}_a=\{u^s(x; z_j^a), \partial_n u^s(x; z_j^a): x\in\Gamma, j=1,\cdots, M\}$.

Let $\Omega_D\subset \Omega$ be the sampling domain for imaging the obstacle, we introduce the following imaging indicator
$$
I_D(y)=\sum_{j=1}^M\left(\left|\int_{\Gamma}u^s(x, z_j^a)\overline{\Phi(x, y)}\mathrm{d}s(x)\right|+\left|\int_{\Gamma}\partial_n u^s(x, z_j^a)\overline{\partial_n\Phi(x, y)}\mathrm{d}s(x)\right|\right)
$$
where $y\subset\Omega_D$ denotes the sampling point. It is expected that the indicator function $I_D(y)$ attains its extreme values when the sampling point approaches the true boundary $\partial D$ from inside or outside the obstacle $D$, and thus the profile of the obstacle can be qualitatively highlighted by plotting the indicator function over a suitably chosen sampling grid that covers the obstacle. Although a mathematical analysis of the indicating property is currently not available, the expected behavior is observed in all of our numerical experiments. 

In addition, to further illustrate the performance of the proposed indicator, we also recall the direct sampling method based on the imaging function
$$
I_C(y)=\sum_{j=1}^M\left|\int_{\Gamma}u^s(x, z_j^a)\overline{\Phi(x, y)}\mathrm{d}s(x)\right|,
$$
which is proposed in \cite{ItoDSM} for the inverse medium scattering problem and applicable to the inverse obstacle scattering problem, too. We shall later compare these two indicator functions in \Cref{sec: numerical_experiments}.

\section{Numerical examples}\label{sec: numerical_experiments}

The aim of this section is to develop several numerical experiments to verify the performance of the sampling methods proposed in the previous sections. To generate the synthetic Cauchy data of the total field, the direct problem \eqref{eq:Helmholtz}--\eqref{eq:Sommerfeld} is reformulated as a boundary integral equation, which is solved by the Nystr\"{o}m method \cite{CK19}. Here, the numerical quadrature rule with $64$ equidistant grid points on $[0,2\pi]$ is adopted. The synthetic Cauchy data is collected at 512 receivers that are equidistantly distributed on the circle centered at the origin with radius of 10. To test the stability of the proposed methods, we disturb the Cauchy data $\{u(x; S),\,\partial_{n}u(x; S):x\in\Gamma\}$ with some random noise such that
\begin{align*}
u^\epsilon&:=u+\epsilon r_1|u|\mathrm{e}^{\mathrm{i}\pi r_2},\\
\partial_nu^\epsilon&:=\partial_{n}u+\epsilon r_1|\partial_nu|\mathrm{e}^{\mathrm{i}\pi r_2},
\end{align*}
where $r_1$ and $r_2$ are two uniformly distributed random numbers ranging from $-1$ to $1$ and $\epsilon>0$ is the noise level. Throughout this section, $\epsilon$ is set to be $5\%.$

In our simulations, the two auxiliary curves $\Gamma_1$ and $\Gamma_2$ are chosen to be the circles centered at the origin with radii $9$ and 18, respectively. The integrals over the two auxiliary curves $\Gamma_1$ and $\Gamma_2$ are numerically approximated by the trapezoidal rule with 512 grid points. Morozov's discrepancy principle is utilized to determine the regularization parameter $\alpha$ in \eqref{Tikhonov}.

In the direct sampling method for determining the source points, the sampling domain $\mathcal{T}=\mathcal{T}_1\cup\mathcal{T}_2$ is chosen in terms of the polar coordinate to be $\mathcal{T}_1=\{(r,\theta):r\in[3,10),\theta\in[0,2\pi)\},$ with $100\times300$ equidistantly distributed mesh grid and $\mathcal{T}_2=\{(r,\theta):r\in(10,18],\theta\in[0,2\pi)\},$ with $100\times500$ equidistantly distributed mesh grid. To recover the obstacle, the sampling domain $\mathcal{T}_3$ is chosen to be $\mathcal{T}_3=\{(r,\theta):r\in[0,3),\theta\in[0,2\pi)\},$ with $100\times500$ equidistantly distributed mesh grid.  It deserves noting that, different from the usual rectangular sampling domains as selected in most existing algorithms, in our problem setting it is more convenient to select circular sampling domains since the measurement circle could be naturally placed as the interface of the interior and exterior sampling domains for finding the point sources. To reconstruct the obstacle by the direct sampling method, the auxiliary/reference point sources are incorporated to achieve the reconstruction. 

In the following figures concerning the geometry setting of the problem, the black solid curves denote the boundaries of the exact obstacle. The red points indicate the exact source points. The 512 receivers are equally distributed on the blue solid lines. The green and black dashed lines denote the two auxiliary curves $\Gamma_1$ and $\Gamma_2$, respectively. The small blue `+' markers mark the auxiliary source points. In the figures illustrating the sampling results, the black dashed lines designate the boundary of the exact obstacle.

\begin{example}[Sound-soft obstacles] 
In the first example, we consider the reconstruction of the sound-soft obstacle and the excitation source points with $k=14$. In \Cref{fig: Dirichlet}, the boundary of the exact obstacle is given by:
$$
x(t) = (\cos t+0.65\cos2t-0.65, 1.5\sin t),\quad 0\le t\le 2\pi.
$$

By adding 12 auxiliary source points into the co-inversion setup, we show the reconstructions in \Cref{fig: Dirichlet}(b)-(e). 
In \Cref{fig: Dirichlet}(b), we plot the reconstruction of the obstacle. As can be seen in \Cref{fig: Dirichlet}(b), the value of the indicator function highlights the boundary of the kite-shaped obstacle, while the indicator is relatively small both inside and outside the obstacle. One can further find that both the convex part and the concave part of the obstacle could be well recovered, which validates the effectiveness of our method.

For the reconstruction of the source points, we can see that in either \Cref{fig: Dirichlet}(c) or \Cref{fig: Dirichlet}(d), there are 10 significant local maximizers in $\mathcal{T}_2$ and 5 significant local maximizers in $\mathcal{T}_1,$ respectively. Denoting these local maximizers of \Cref{fig: Dirichlet}(c) by the black `+' markers and the local maximizers of \Cref{fig: Dirichlet}(d) by the blue `$\times$' markers in \Cref{fig: Dirichlet}(e), we compare the locations with the exact source locations (denote by the red small point), which demonstrates source locations can be well identified.
From \Cref{fig: Dirichlet}(c)-(e), we can see that all the functions $I_1$, $I_2$ and $\widehat{I_2}$ are capable of indicating the source points. For clarity, we only plot the imaging results of $I_1$ and $\widehat{I_2}$ in the subsequent examples to show the reconstructions.

\begin{figure} 
	\centering  
	\subfigure[]{\includegraphics[width=0.3\textwidth]{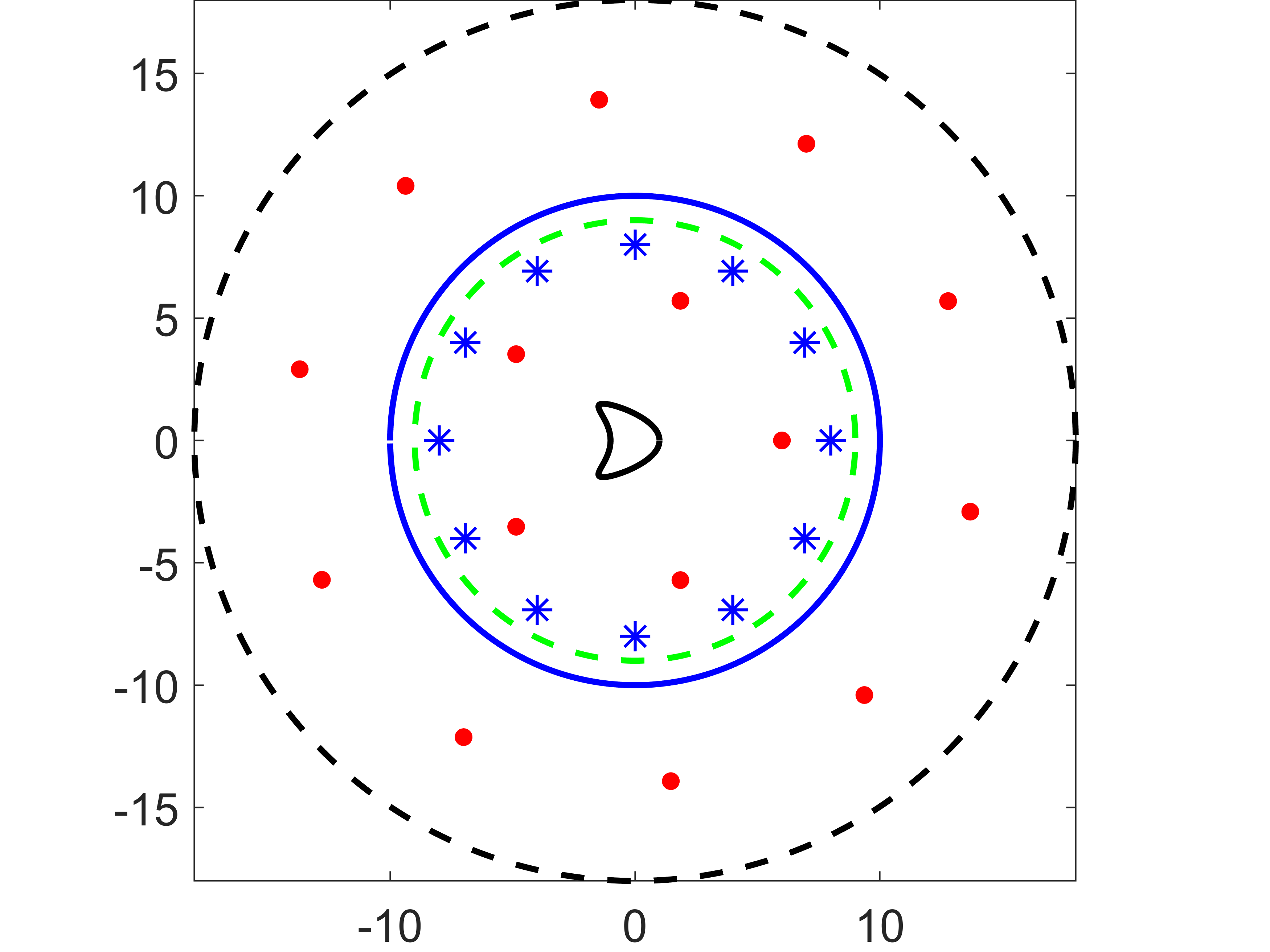}}\quad
	\subfigure[]{\includegraphics[width=0.3\textwidth]{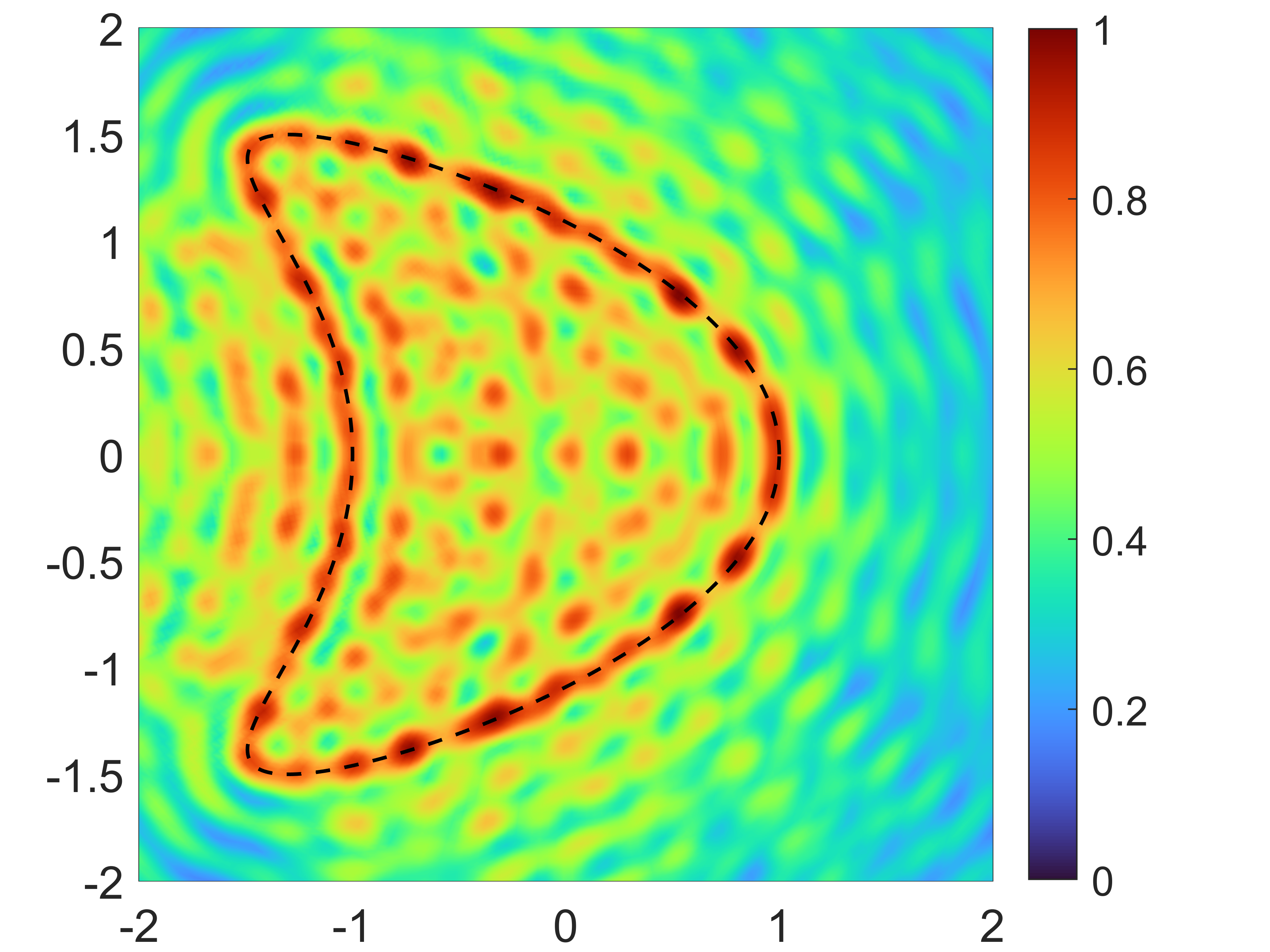}}\\
	\subfigure[]{\includegraphics[width=0.3\textwidth]{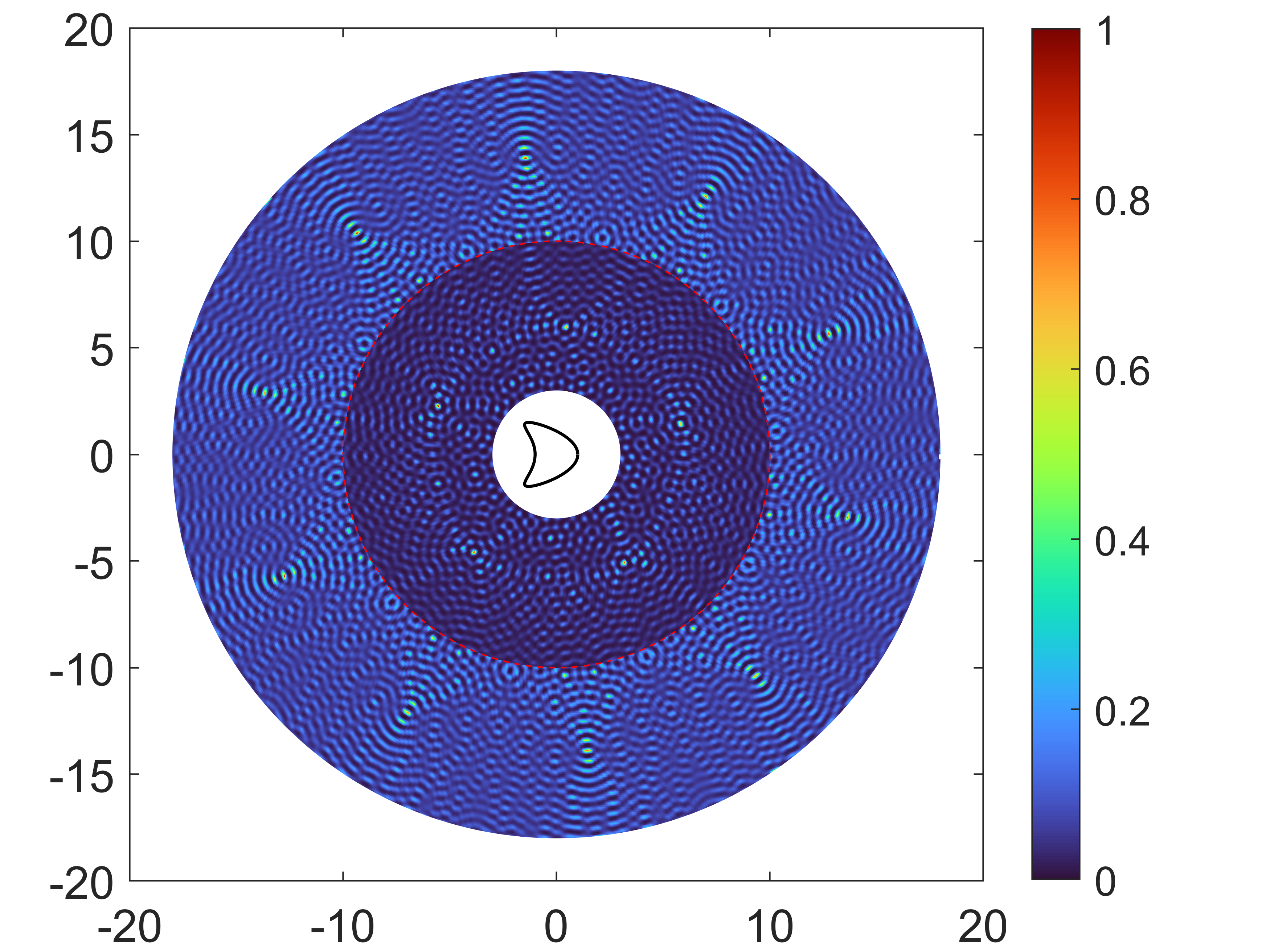}}\quad
	\subfigure[]{\includegraphics[width=0.3\textwidth]{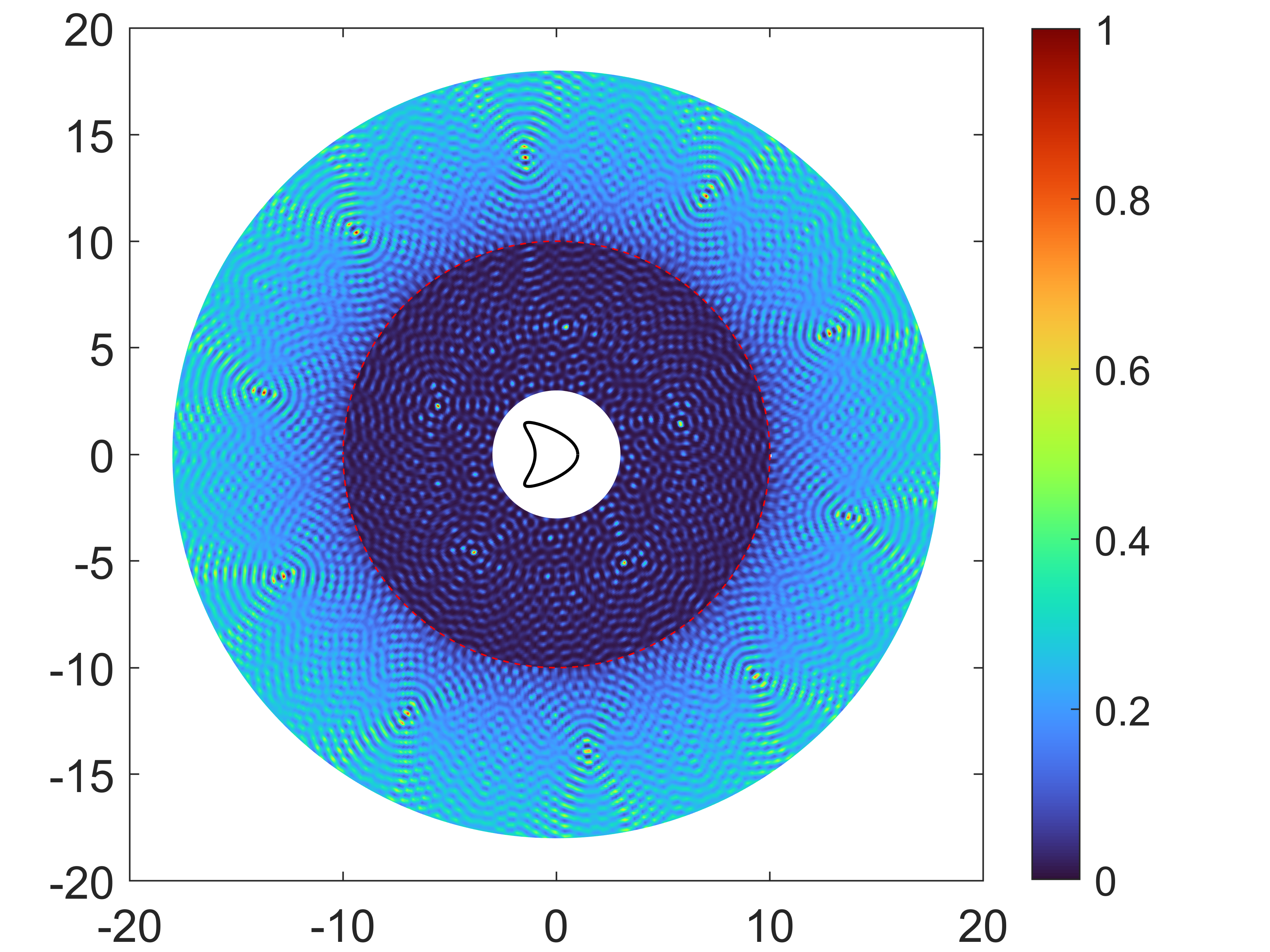}}\quad
	\subfigure[]{\includegraphics[width=0.3\textwidth]{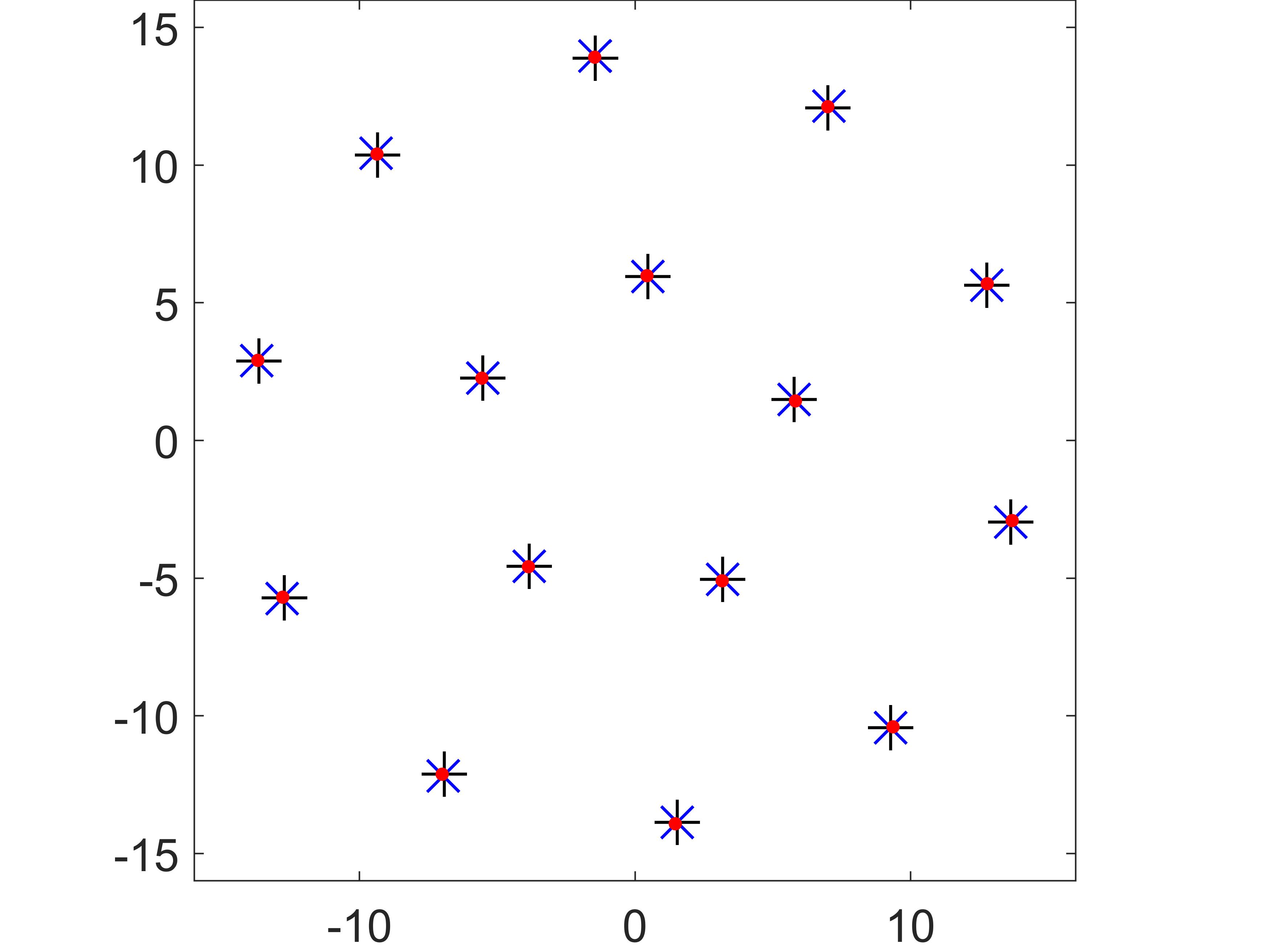}}
	\caption{Reconstruction of a sound-soft kite and the source points. (a) model setup; (b) imaging of the obstacle; (c) imaging of the $I_1(y)$ and $I_2^{-1}(y)$; (d) imaging of the $I_1(y)$ and $\widehat{I}_2(y)$; (e) comparison of the source recovery.}\label{fig: Dirichlet}
\end{figure}

Next, we consider the reconstruction of the sound-soft obstacle with two disjointed components and their excitation sources. The two components of the boundary can be respectively parameterized by
\begin{align*}
  x_1(t) & = (0.5\cos t+0.325\cos2t+0.675,0.75\sin t-1),\quad0\le t\le 2\pi,\\
  x_2(t) & = (\cos t+0.65\cos2t-1.65,1.5\sin t+1),\quad0\le t\le 2\pi.
\end{align*} 
The resulting reconstructions are shown in \Cref{fig: Dirichlet2}. The scatterer consists of two kite-shaped obstacles of different scales and orientations. From \Cref{fig: Dirichlet2}(b)-(d), we find that all the source points and the obstacles are well reconstructed. This example shows that the proposed method has the capability of imaging multiple obstacles and multiple sources.

\begin{figure} 
	\centering  
	\subfigure[]{\includegraphics[width=0.21\textwidth]{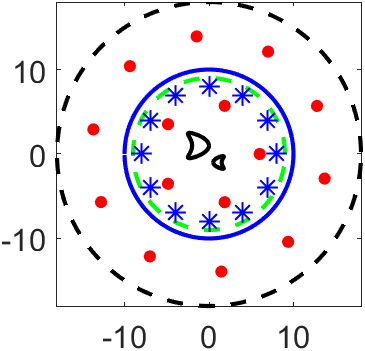}}\quad
	\subfigure[]{\includegraphics[width=0.26\textwidth]{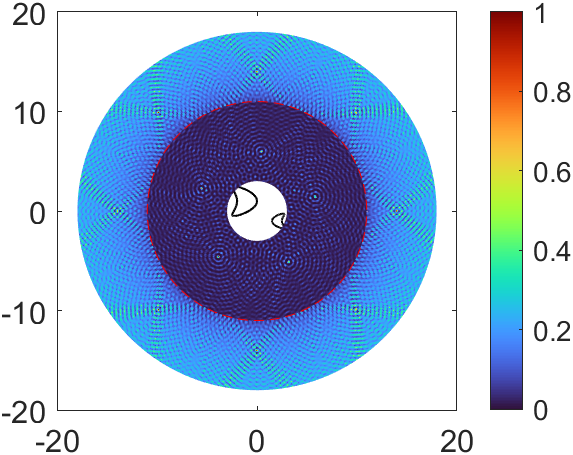}}\quad
	\subfigure[]{\includegraphics[width=0.21\textwidth]{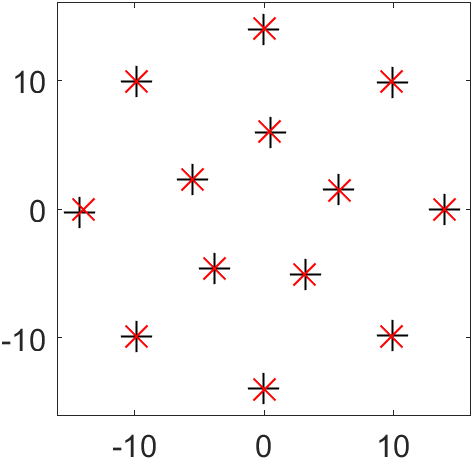}}\quad
	\subfigure[]{\includegraphics[width=0.25\textwidth]{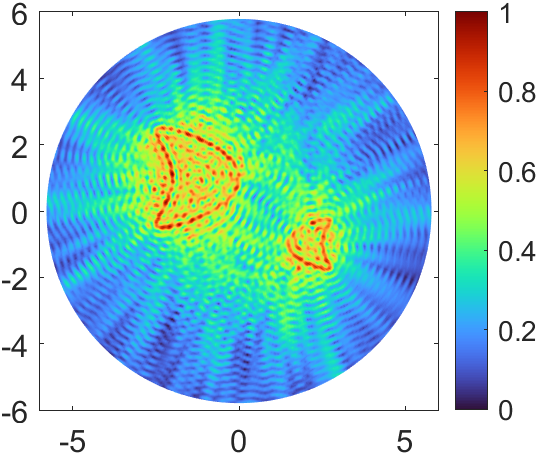}}
	\caption{Reconstruction of two sound-soft kites and the source points. (a) model setup; (b) imaging of the source points; (c) comparison of the source recovery; (d) imaging of the obstacle.}\label{fig: Dirichlet2}
\end{figure}

Furthermore, the two obstacles and the source points are recovered from various limited-aperture observations in \Cref{fig: Dirichlet2i}. As shown in \Cref{fig: Dirichlet2i}(a)(e)(i), only partial data is available in this setup. In this case, the source points and the obstacles that are adequately illuminated can be better reconstructed. In comparison, the non-illuminated portion of the targets (both the source and the obstacle) is less accurately retrieved due to the lack of information.
\end{example}

\begin{figure} 
	\centering  
	\subfigure[]{\includegraphics[width=0.21\textwidth]{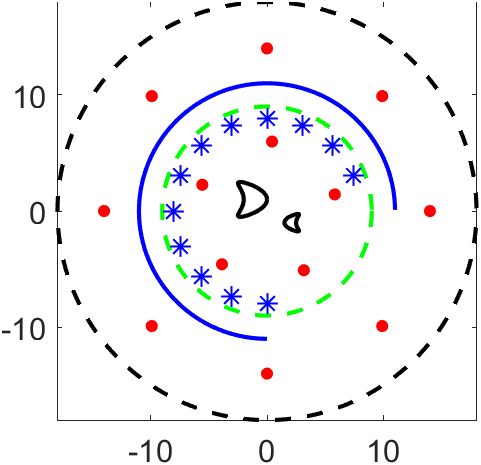}}\quad
	\subfigure[]{\includegraphics[width=0.26\textwidth]{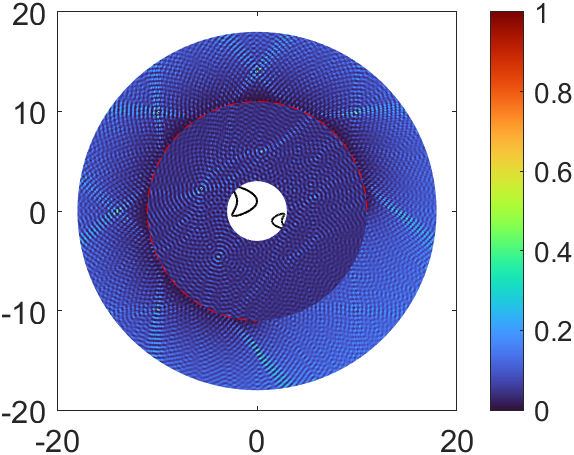}}\quad
	\subfigure[]{\includegraphics[width=0.21\textwidth]{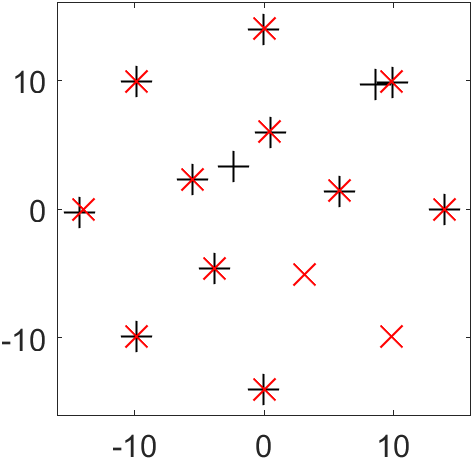}}\quad
	\subfigure[]{\includegraphics[width=0.25\textwidth]{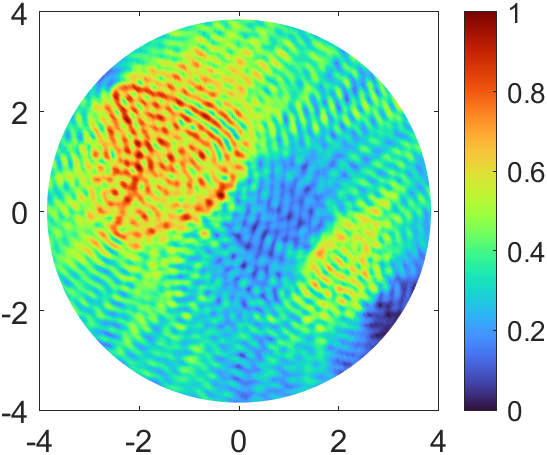}}\\

	\subfigure[]{\includegraphics[width=0.21\textwidth]{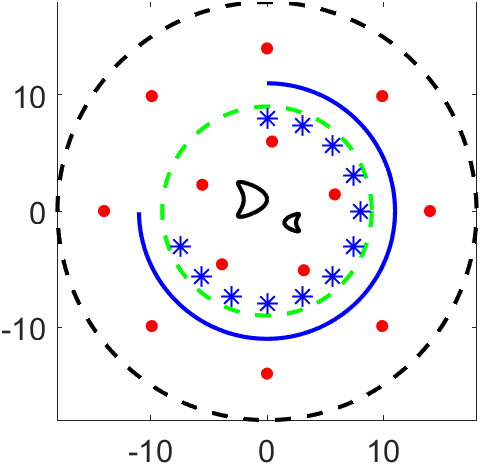}}\quad
	\subfigure[]{\includegraphics[width=0.26\textwidth]{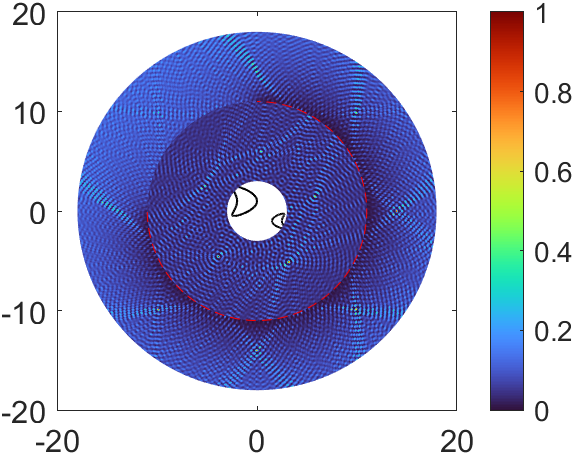}}\quad
	\subfigure[]{\includegraphics[width=0.21\textwidth]{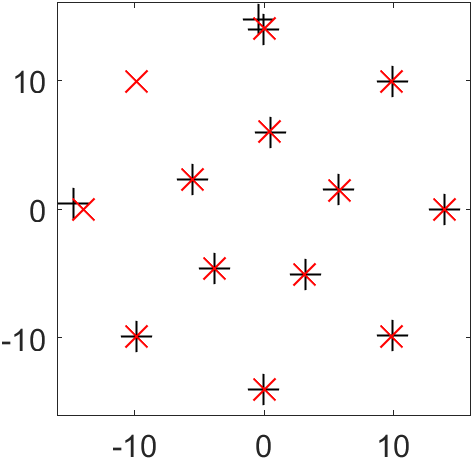}}\quad
	\subfigure[]{\includegraphics[width=0.25\textwidth]{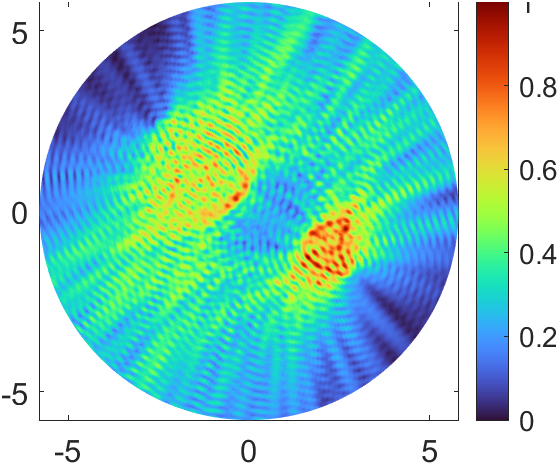}}\\
		
	\subfigure[]{\includegraphics[width=0.21\textwidth]{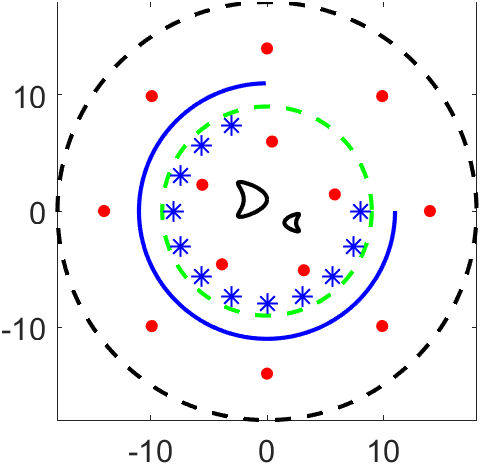}}\quad
	\subfigure[]{\includegraphics[width=0.26\textwidth]{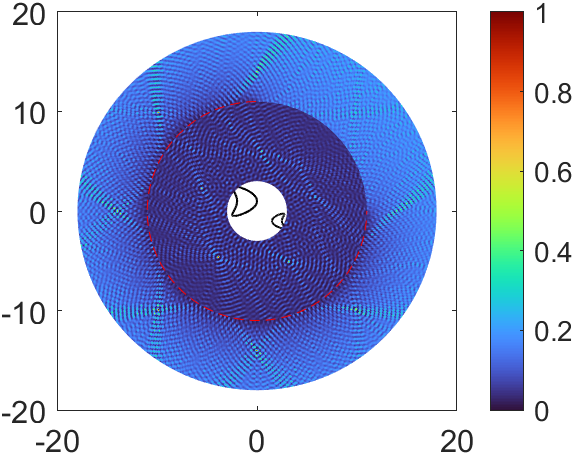}}\quad
	\subfigure[]{\includegraphics[width=0.21\textwidth]{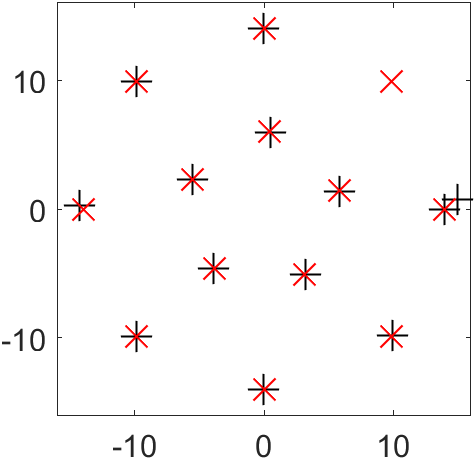}}\quad
	\subfigure[]{\includegraphics[width=0.25\textwidth]{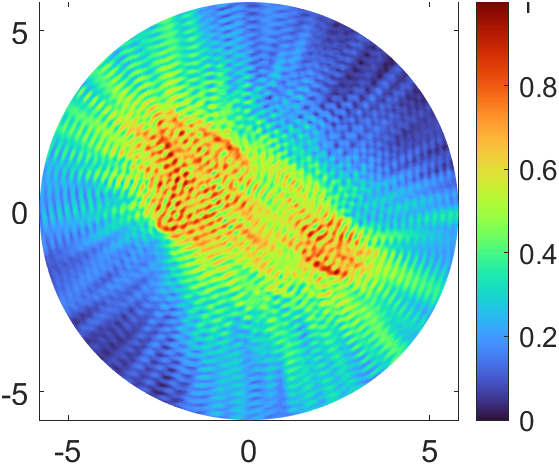}}
	\caption{Reconstruction of two sound-soft kites and the source points with limited aperture data. Rows 1-3: the three cases of limited data;  Column 1: model setup; Column 2:  imaging of the source points; Column 3: comparison of the source recovery; Column 4: imaging of the obstacles.}\label{fig: Dirichlet2i}
\end{figure}


\begin{example}[Sound-hard obstacles]
The aim of the second example is to test the performance of the sampling methods by reconstructing the sound-hard obstacle and the source points. The boundary of the obstacle is parameterized by
\[
x(t)=0.5(\cos^3t+\cos t,\sin^3t+\sin t),\quad 0\leq t\le 2\pi.
\]

The reconstructions with $k=6$ and $k=12$ are displayed in \Cref{fig:Neumann}, which shows that the shape of the obstacle and the source locations can be identified by the proposed method. In \Cref{fig:Neumann}(b) and \Cref{fig:Neumann}(e), we can see that the indicator function $I_1(y)$ has 5 significant local maximizers in $\mathcal{T}_1$ while $\widetilde{I}_2(y)$ has 6 significant local maximizers in $\mathcal{T}_2$. All the significant local maximizers in $\mathcal{T}_1$ and $\mathcal{T}_2$ compiles well with the exact locations. In \Cref{fig:Neumann}(c) and \Cref{fig:Neumann}(f), we display the imaging of the indicator $I_D(y).$ We can see that $I_D(y)$ attains its local minimizers exactly on the boundary of the obstacle, which illustrates that our method performs well in identifying the boundary of the sound-hard obstacle. 

Next, we test the influence of the quantity of the auxiliary sources on the reconstruction of the obstacle. We can see from \cref{fig:Neumann2} that as the number of auxiliary sources increases, the resolution of the sound-hard obstacle could be enhanced. When the auxiliary sources is relatively small, we may find several apparent local minimizers inside the obstacle, and it seems relatively difficult to capture the boundary exactly from the imaging figure. When we increase the auxiliary sources, we can easily locate the boundary where the indicator attains its local minimizer.
\end{example}

\begin{figure}\centering
	\subfigure[]{\includegraphics[width=0.26\textwidth]{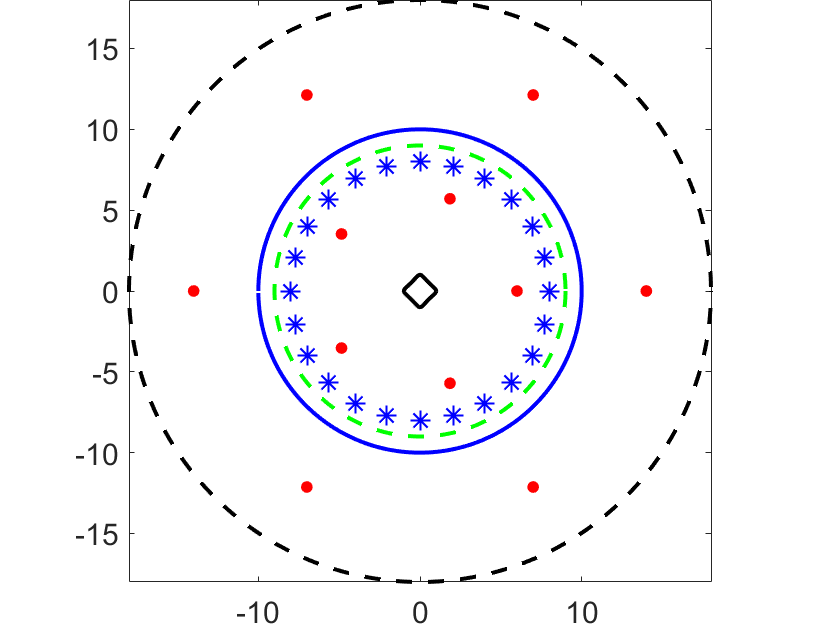}}\quad
	\subfigure[]{\includegraphics[width=0.32\textwidth]{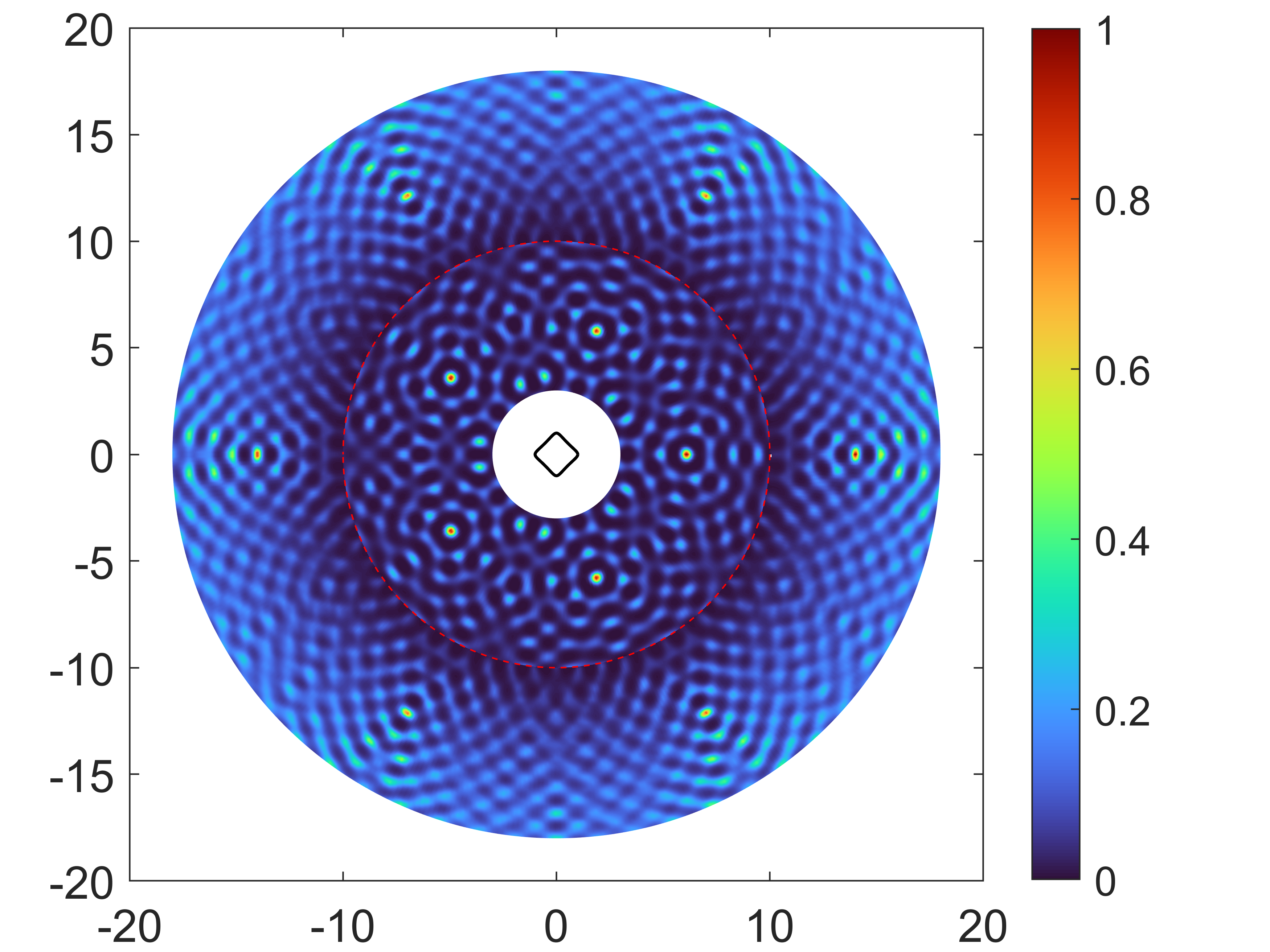}}\quad
	\subfigure[]{\includegraphics[width=0.3\textwidth]{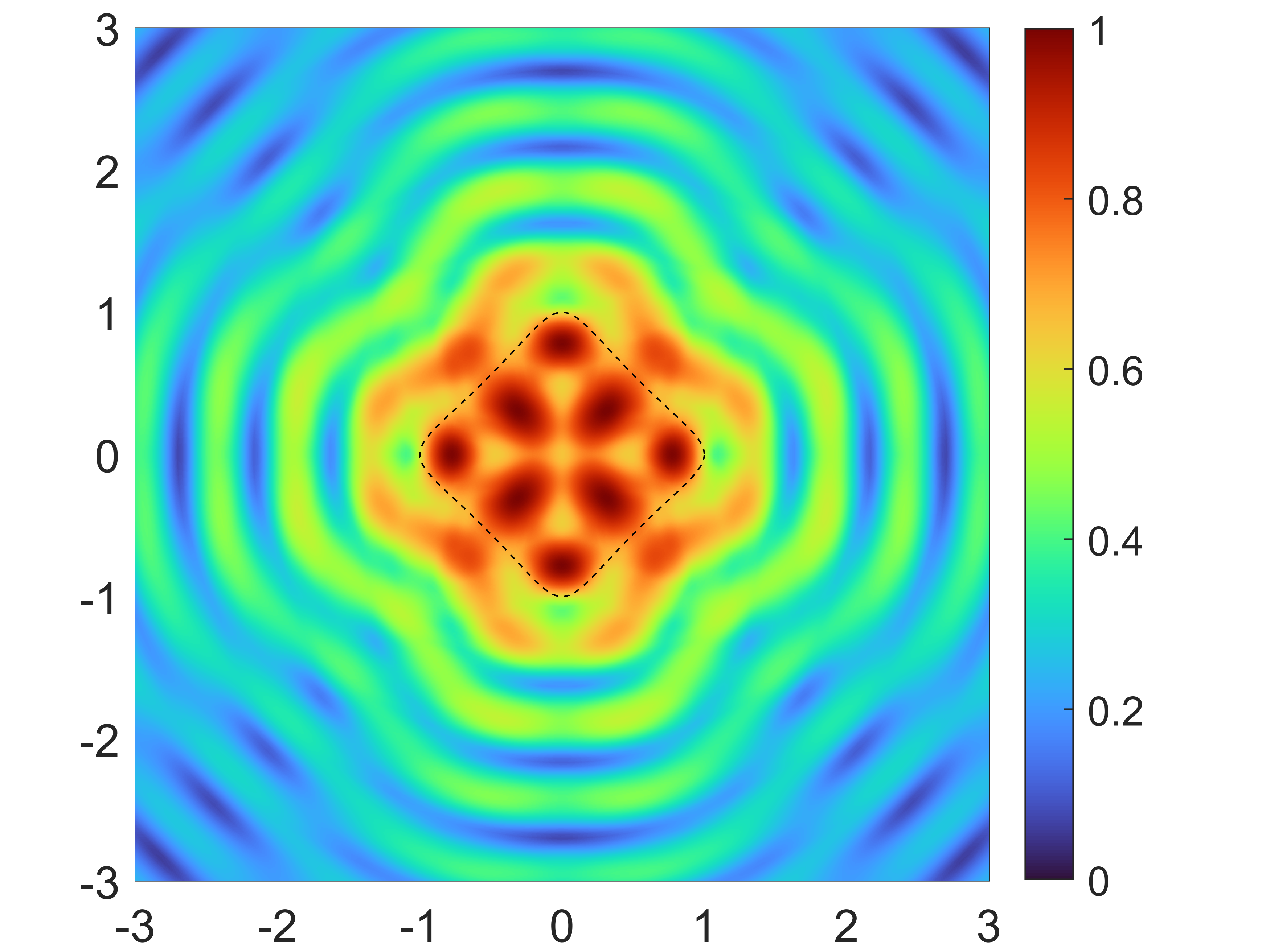}}\\
	\subfigure[]{\includegraphics[width=0.26\textwidth]{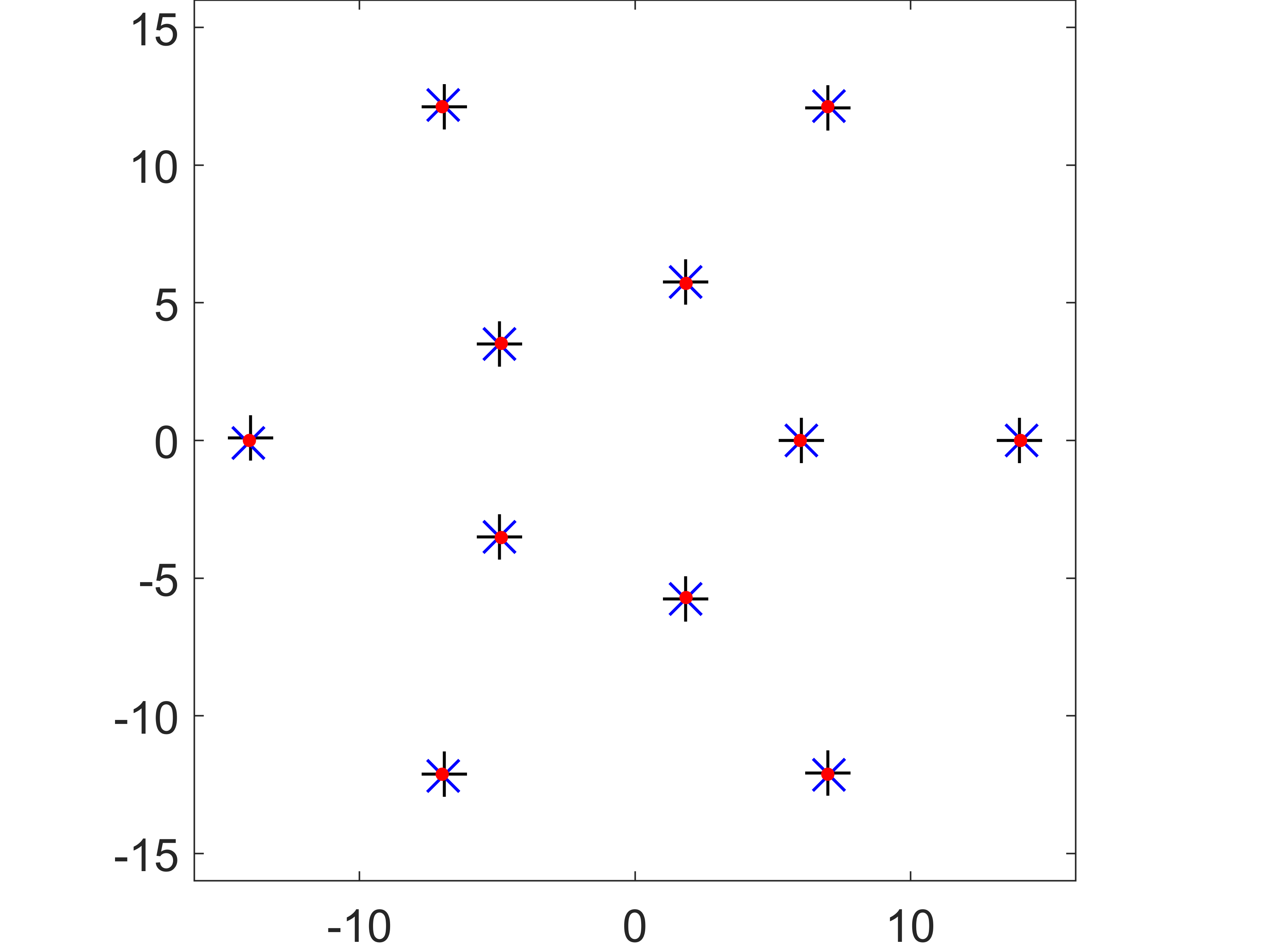}}\quad
	\subfigure[]{\includegraphics[width=0.32\textwidth]{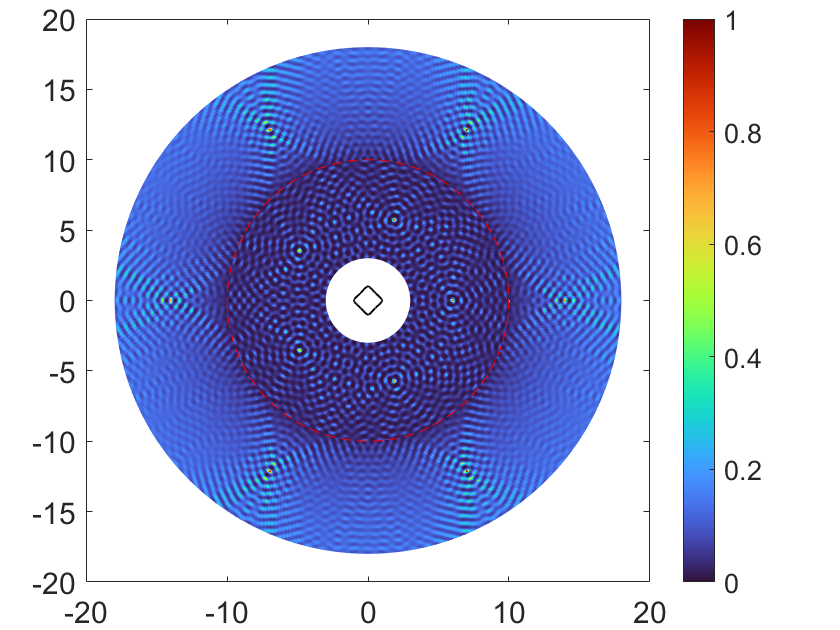}}\quad
	\subfigure[]{\includegraphics[width=0.3\textwidth]{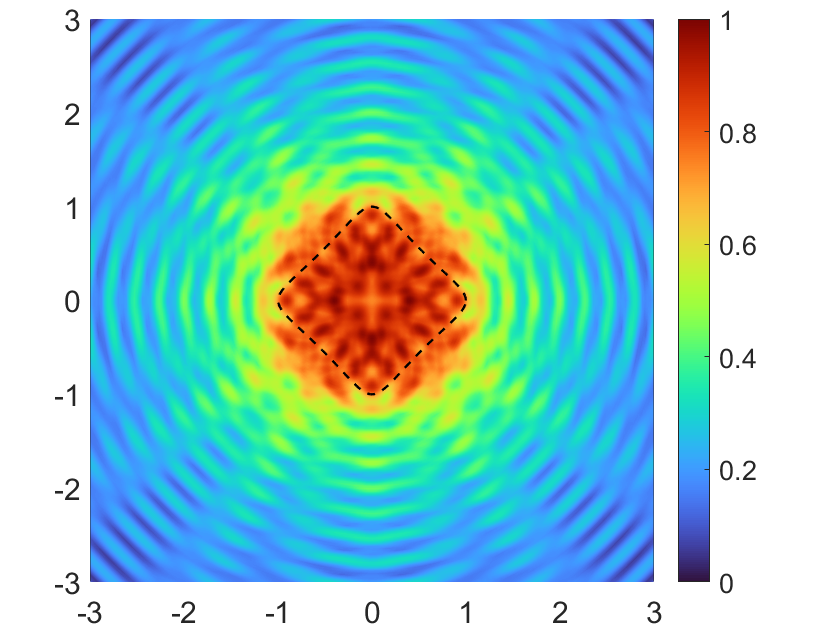}}
	\caption{Reconstruction of a sound-hard obstacle and the excitation source points subject to different wavenumbers. (a) model setup; (b)-(d): reconstructions with $k=6;$ (e)-(f):  reconstructions with $k=12.$}\label{fig:Neumann}
\end{figure}

\begin{figure}
    \centering
    \subfigure[]{\includegraphics[width=0.29\textwidth]{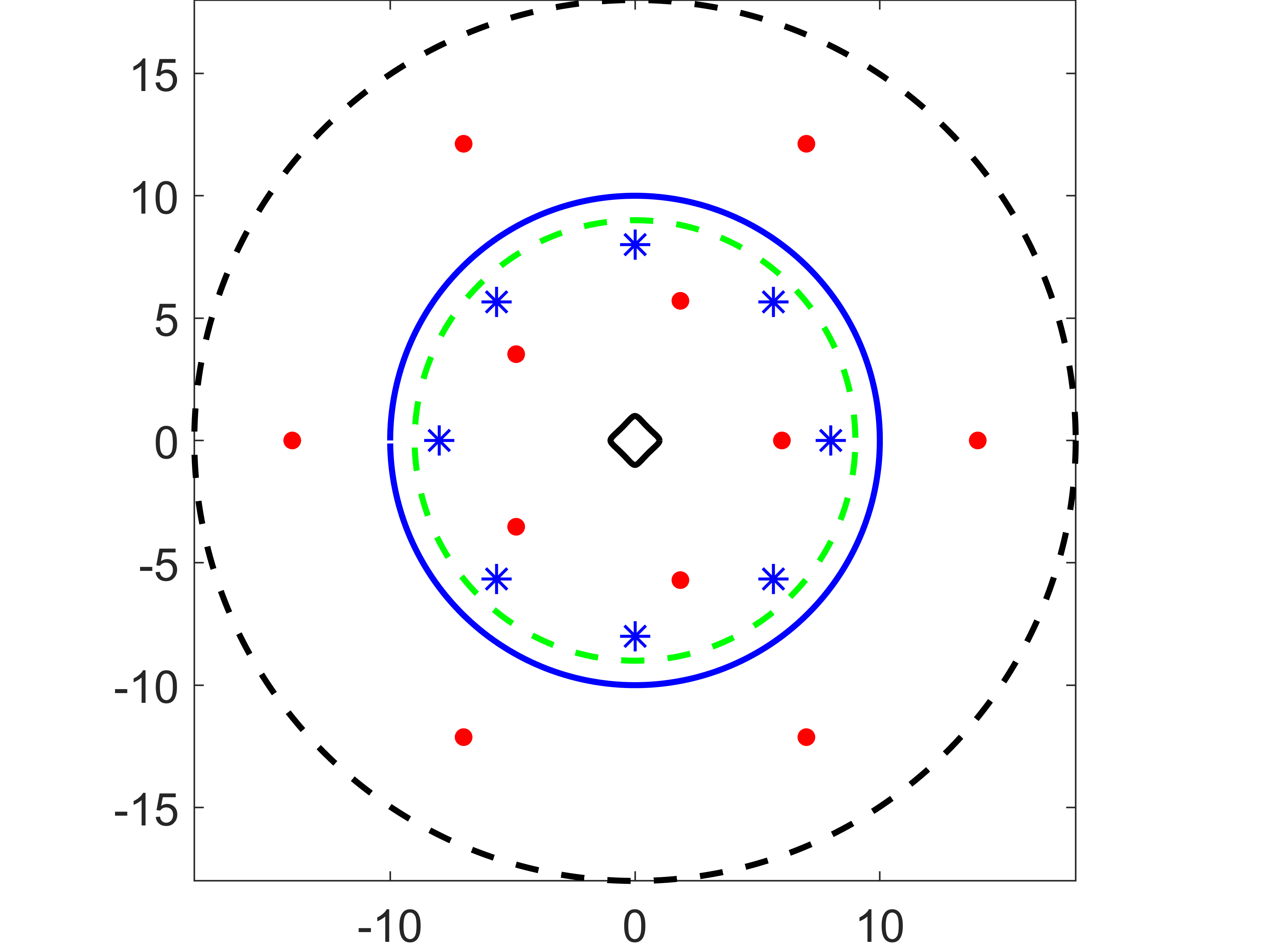}}\qquad
    \subfigure[]{\includegraphics[width=0.29\textwidth]{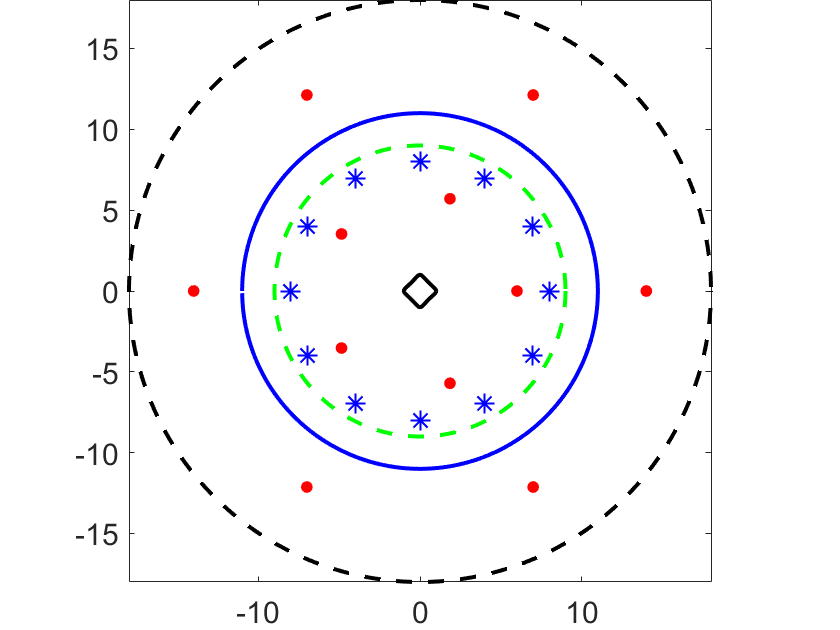}}\qquad
    \subfigure[]{\includegraphics[width=0.29\textwidth]{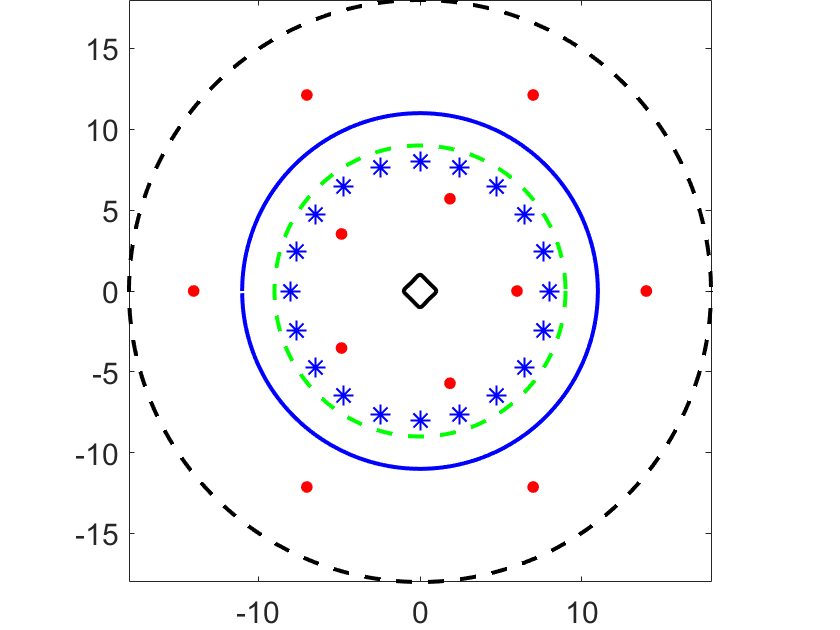}}\\ \quad
    \subfigure[]{\includegraphics[width=0.31\textwidth]{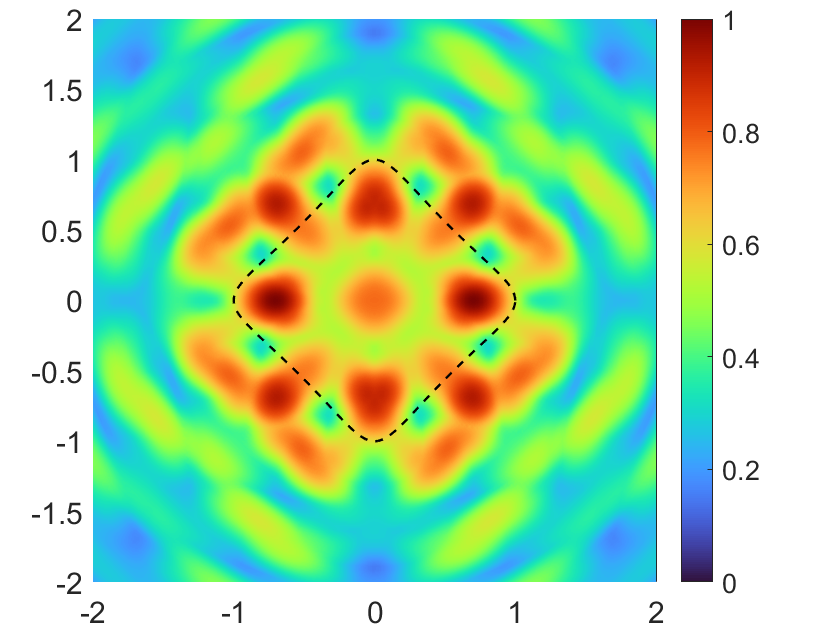}}\quad
    \subfigure[]{\includegraphics[width=0.31\textwidth]{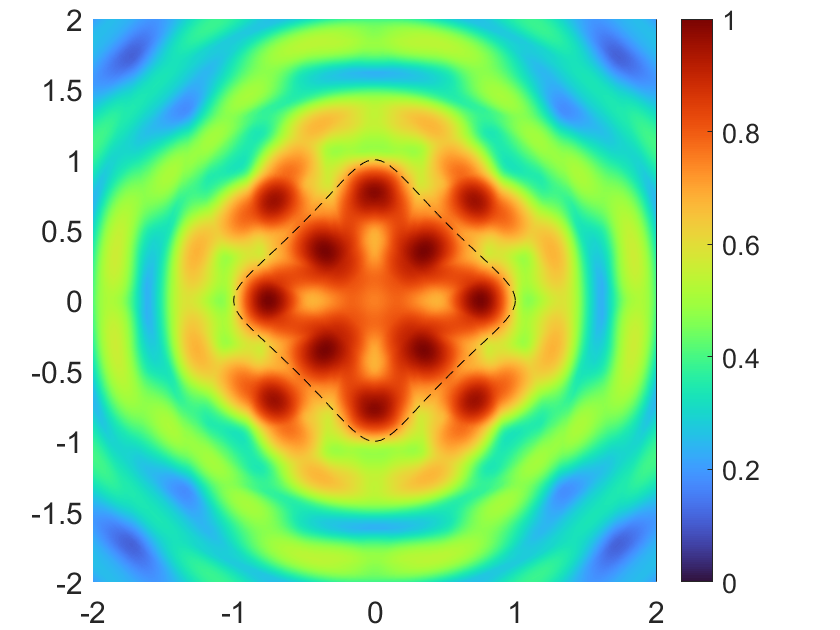}}\quad
    \subfigure[]{\includegraphics[width=0.31\textwidth]{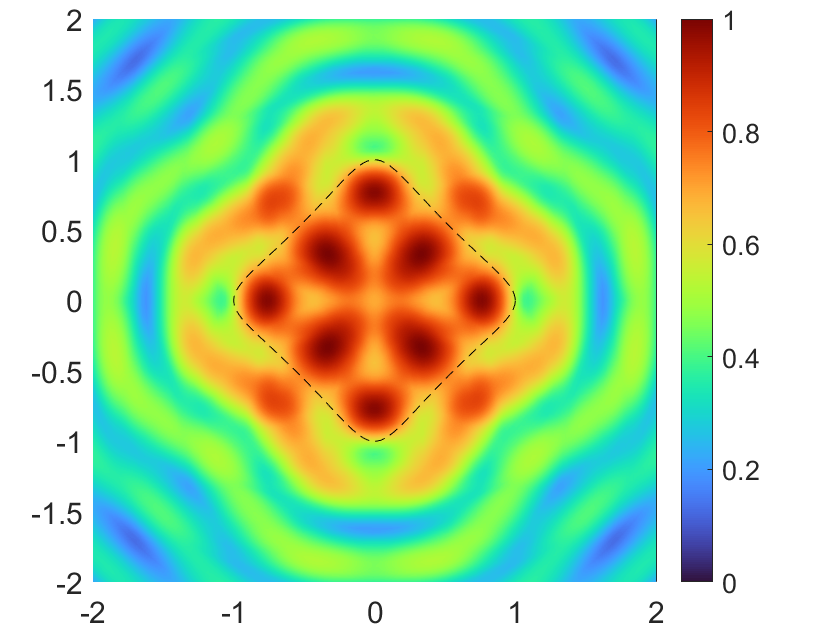}}
\caption{Reconstruction of a sound-hard obstacle by the different number of auxiliary source points ($k=6$). Row 1: model setups; Row 2: reconstructions.}\label{fig:Neumann2}
\end{figure}


\begin{example}[Multiscale obstacles]
The scatterer in this example consists of a sound-soft kite parameterized by
$$
x(t) = (\cos t-0.4\cos4t,\sin t),\quad 0\le t\le 2\pi,
$$
and a sound-hard disk centered at $(1,-1)$ with radius $r=0.1$. Note that the two scattering components are of significant geometrical scale. Moreover, the smaller component is very close to the larger one, meanwhile, two of the source points are close to each other. Apparently, these features of configuration lead to difficulties in resolving them. We consider the reconstruction of these multiscale obstacles and the source points simultaneously.  Now $24$ auxiliary source points are supplemented to the co-inversion problem. To ease the visualization of the true obstacles, the local part $[-2,2]\times[-2,2]$ in \cref{fig:multiscale}(a) is zoomed in and plotted in \cref{fig:multiscale}(b). Numerical results in \cref{fig:multiscale} demonstrate that a low frequency may not be sufficient to accurately identify a small scatterer or recognize the adjacent point sources. Hence, a moderately large wavenumber is necessary to image the details of the targets. We consider the results of full aperture measurements in \cref{fig:multiscale} and the results of limited aperture measurements in \cref{fig: limited}, respectively.
\end{example}

\begin{figure} 
	\centering  
	\subfigure[]{\includegraphics[width=0.3\textwidth]{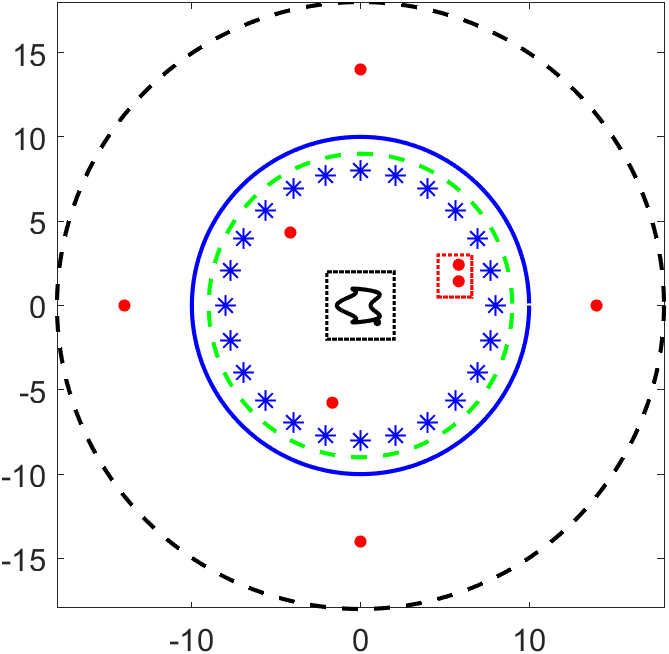}}\quad 
	\subfigure[]{\includegraphics[width=0.3\textwidth]{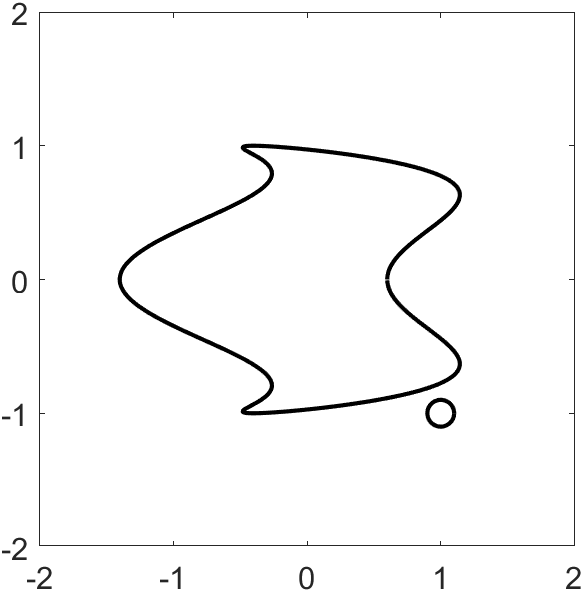}}\\
	\subfigure[]{\includegraphics[width=0.32\textwidth]{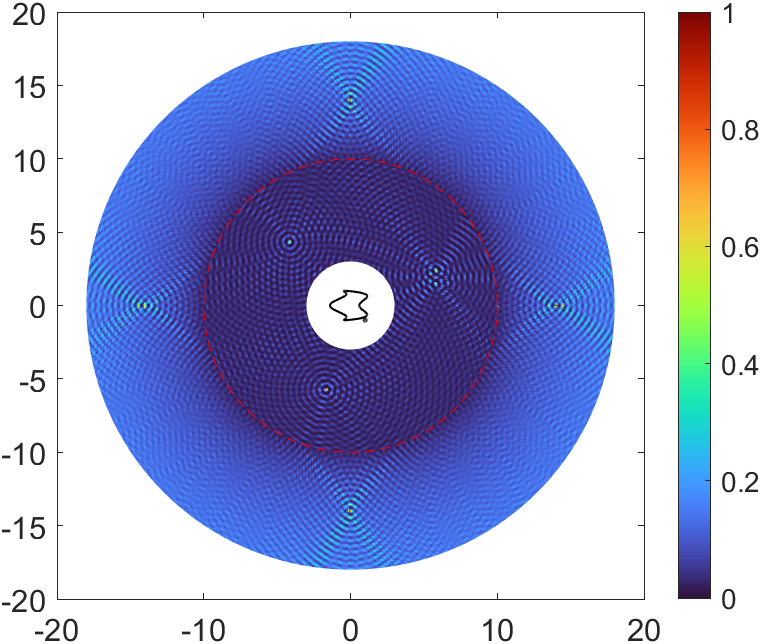}}\quad
	\subfigure[]{\includegraphics[width=0.27\textwidth]{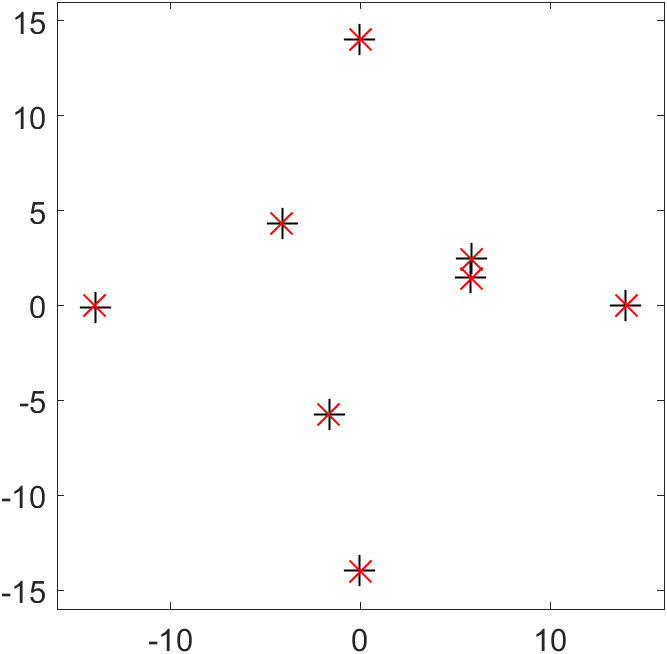}}\quad
	\subfigure[]{\includegraphics[width=0.32\textwidth]{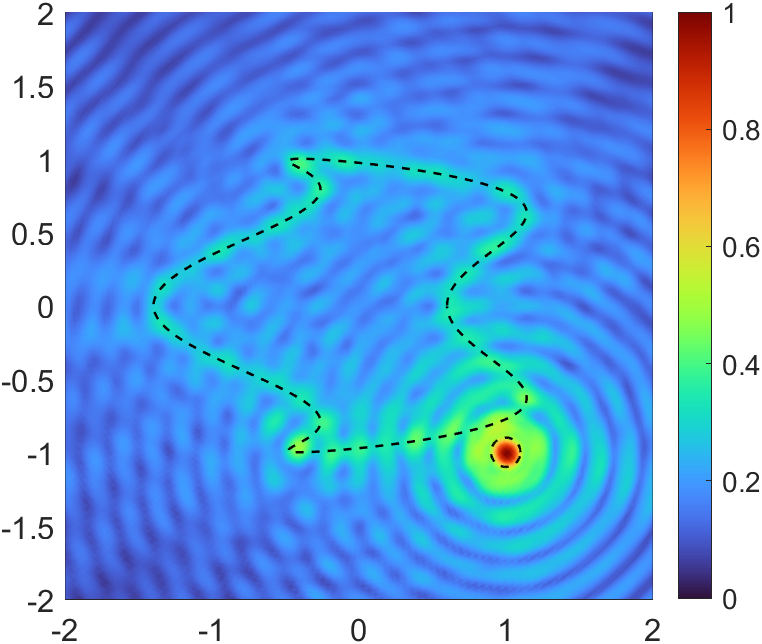}}
	\subfigure[]{\includegraphics[width=0.32\textwidth]{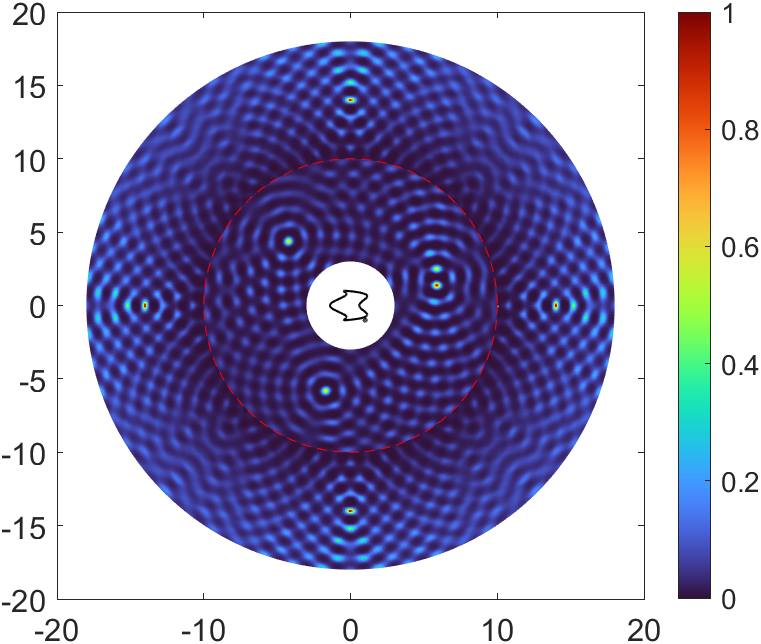}}\quad
	\subfigure[]{\includegraphics[width=0.27\textwidth]{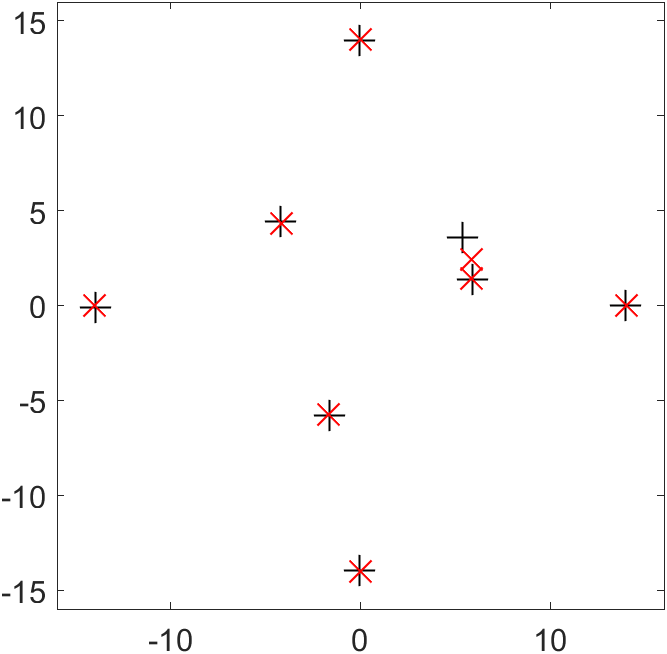}}\quad
	\subfigure[]{\includegraphics[width=0.32\textwidth]{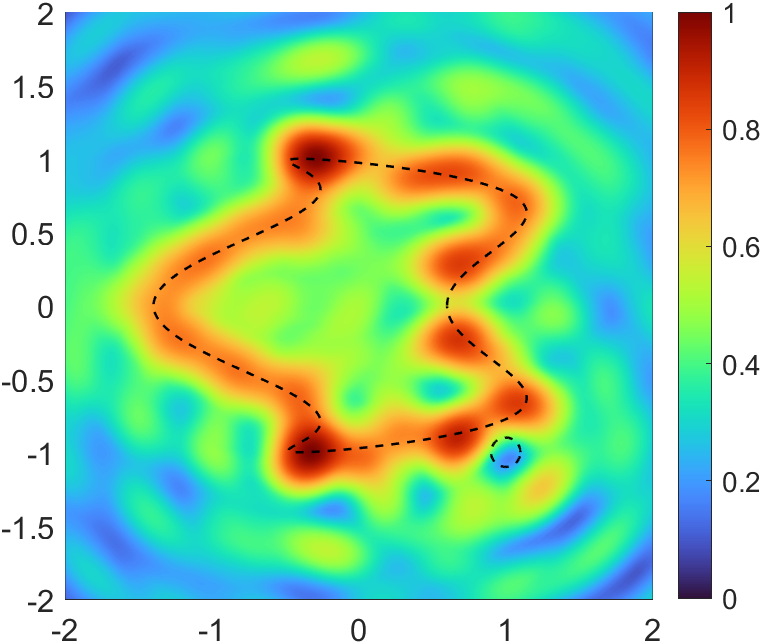}}
	\caption{Reconstruction of the multi-scale obstacles and 8 sources from full-aperture measurements. (a)-(b) model setups; (c)-(e) results with $k=16$; (f)-(h) results with $k=6$.}\label{fig:multiscale}
\end{figure}

\textbf{Full aperture case.} The reconstructions with $k=16$ are shown in \Cref{fig:multiscale}(c)-(e). In \Cref{fig:multiscale}(c), 8 significant local maximizers can be observed and they can be viewed as the reconstructed source points. Further, we compare the recovered source locations with the exact ones in \Cref{fig:multiscale}(d) and it shows that the source points are well-reconstructed. It deserves noting that, though the source points in the red dashed rectangle (see \Cref{fig:multiscale}(a)) are adjacent to each other, they can still be recognized by our method. We also observe from \Cref{fig:multiscale}(e) that the boundary of each obstacle is clearly captured. 

Next, let $k=6$ and display the reconstructions in \Cref{fig:multiscale}(f)-(h). From \Cref{fig:multiscale}(f), one would clearly see 4 significant local maximizers in $\mathcal{T}_2$ but it seems to be a bit difficult in determining whether 3 or 4 sources are involved in $\mathcal{T}_1$. By collecting the largest 8 local maximizers as the reconstructed source points, we can observe that the sources are very well recovered except for the last one. In other words, the last source point fails to be accurately identified, see \Cref{fig:multiscale}(g). It can be also seen from \Cref{fig:multiscale}(h) that the small disk centered at $(1,-1)$ is not reasonably recovered while the kite-shaped obstacle is well-reconstructed. Hence, a suitably chosen wavenumber is influential in the reconstruction.

\textbf{Limited aperture observations.} We finally consider the co-inversion from the limited aperture observations where only partial data is available. By taking $k=16$ and different incidence and observation apertures, we display the reconstructions in \Cref{fig: limited}. As shown in \Cref{fig: limited}(a), the aperture in the first row is chosen to be $[0, \frac{3\pi}{2}].$  We can see from \Cref{fig: limited}(b)-(c), with relatively less accuracy, the source points can be reconstructed roughly. In \Cref{fig: limited}, we find that the small sound-hard disk is well reconstructed, though the small disk locates at the vacant region where there are no source points, it is well-reconstructed. Meanwhile, the kite can be recognized roughly.

Further, we shrink the observation aperture and consider the case where the sources only illuminate the upper half of the co-inversion model. As expected, the source points located in the region surrounded by the measurement aperture can be better identified whereas those in the shadow region can not be accurately imaged. Surprisingly, the upper sound-soft kite can not be recovered but the lower sound-hard disk turns out to be captured though neither the artificial source points nor the receivers are located in the lower part.

\begin{figure} 
	\centering  
	\subfigure[]{\includegraphics[width=0.21\textwidth]{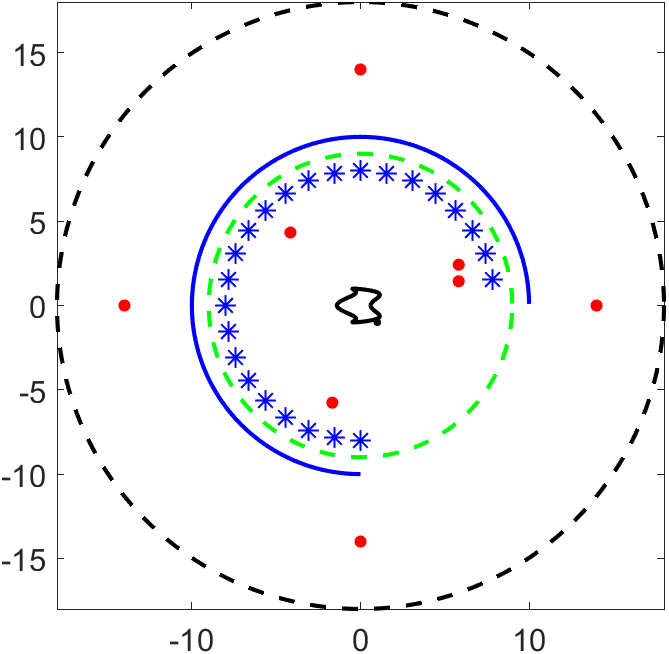}}\quad
	\subfigure[]{\includegraphics[width=0.25\textwidth]{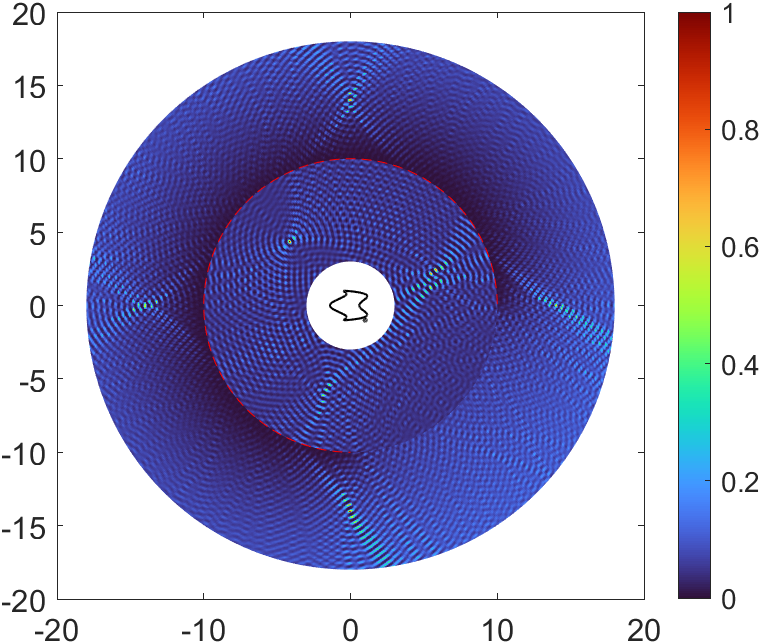}}\quad
	\subfigure[]{\includegraphics[width=0.21\textwidth]{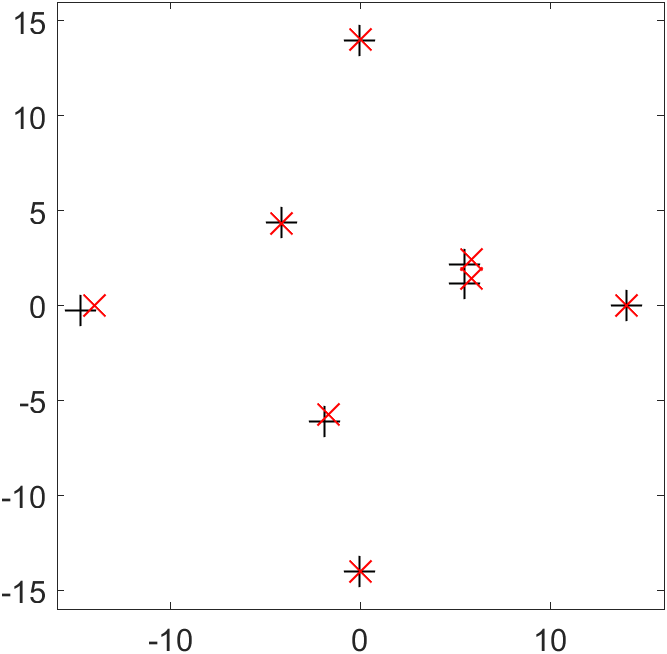}}\quad
	\subfigure[]{\includegraphics[width=0.25\textwidth]{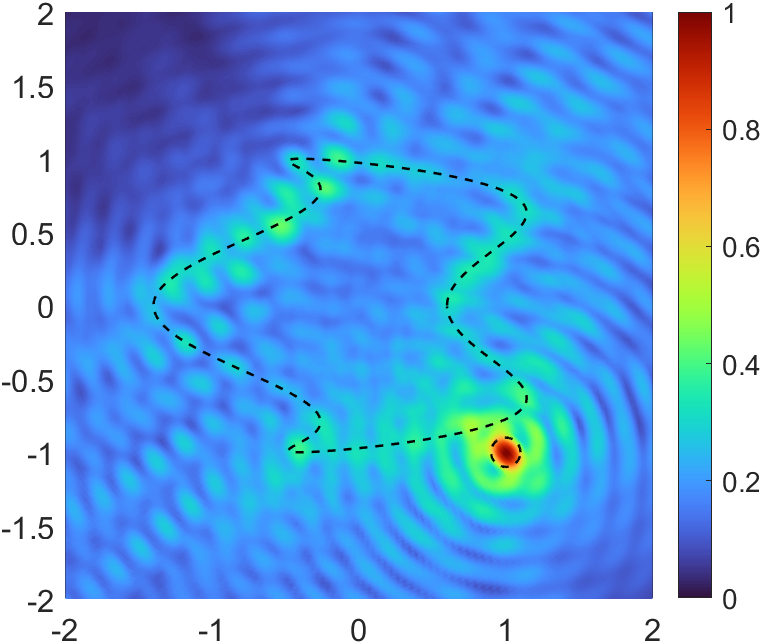}}\\
	\subfigure[]{\includegraphics[width=0.21\textwidth]{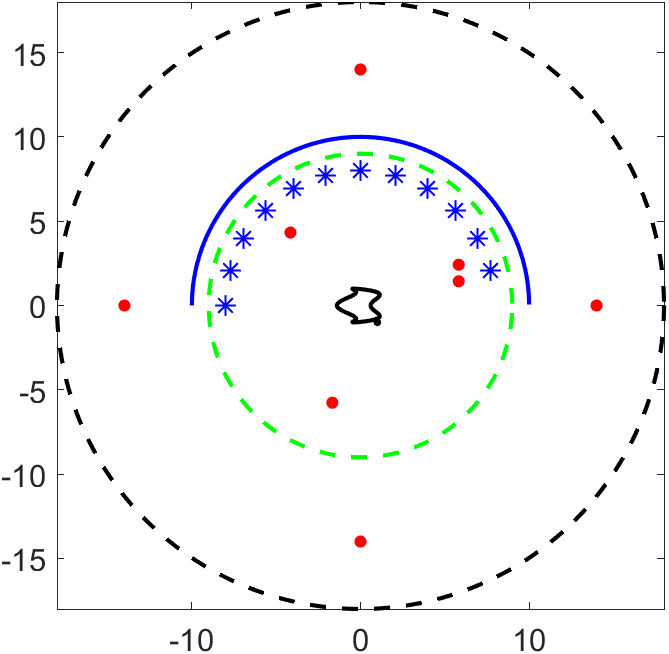}}\quad
	\subfigure[]{\includegraphics[width=0.25\textwidth]{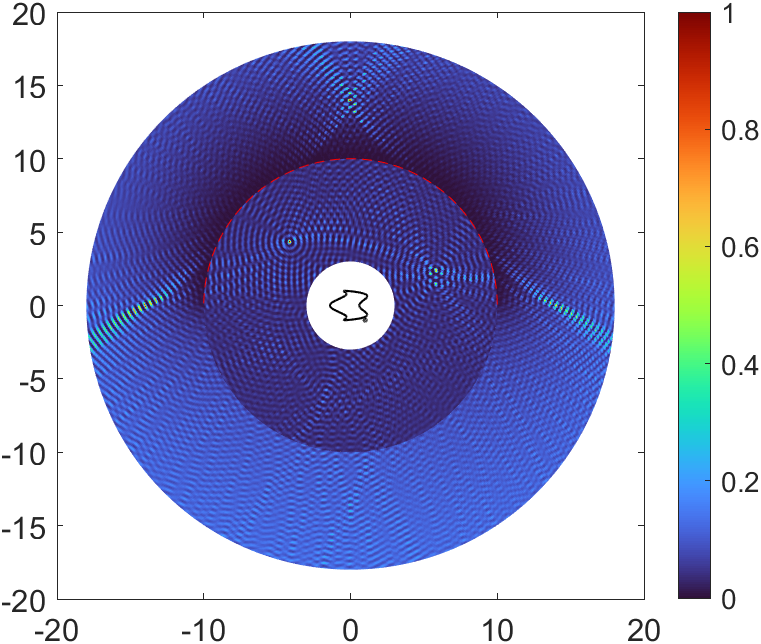}}\quad
	\subfigure[]{\includegraphics[width=0.21\textwidth]{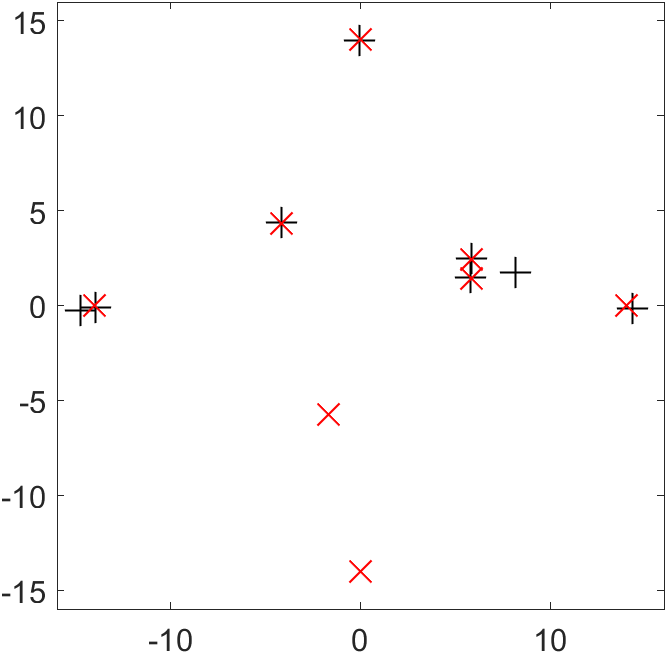}}\quad
	\subfigure[]{\includegraphics[width=0.25\textwidth]{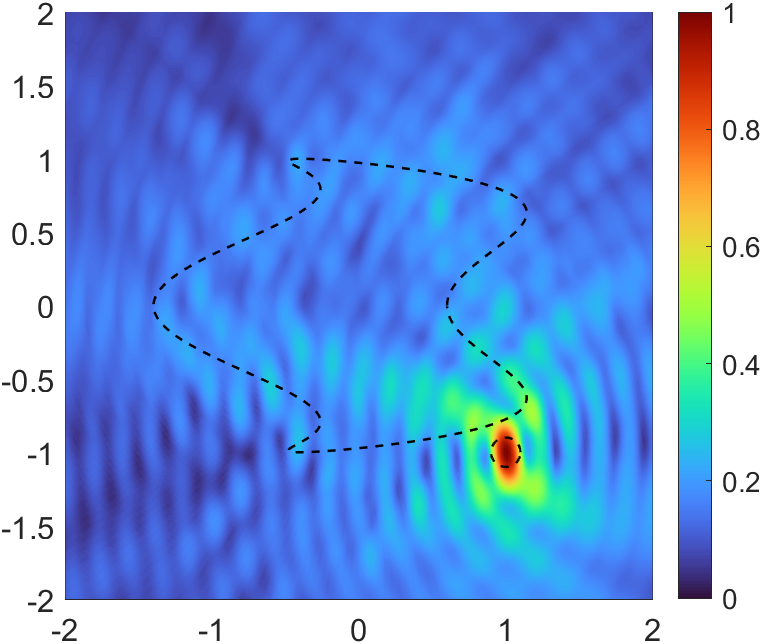}}
	\caption{Reconstruction of the multi-scale obstacles and 8 sources from limited-aperture measurements.}\label{fig: limited}
\end{figure}

\begin{example}[Comparison]
  The last example is devoted to comparing the reconstructions by using different sampling methods. We set the wavenumber $k=14$ and consider a sound-soft starfish-shaped domain whose boundary is parameterized by
  \begin{align*}
  x(t)=(1+0.2\cos5t)(\cos t,\sin t).
  \end{align*}
  In this example, we introduce the different number of auxiliary sources to the co-inversion problem and display the reconstruction in \Cref{fig:starfish}.
  We can see from \Cref{fig:starfish} that the obstacle can be well recognized no matter whether $5$ or $12$ auxiliary source points are added to the geometry setup. When 12 auxiliary sources are added, there is not much difference between the reconstruction results of the two methods, and both two indicator functions recognize the obstacle well. However, our method exhibits better imaging results with higher resolution compared with the conventional method when only 5 auxiliary sources are involved in the co-inversion setup, which shows the feasibility and applicability of our method even though there are not so many auxiliary source points.
\end{example}

\begin{figure} 
	\centering  
	\subfigure[]{\includegraphics[width=0.3\textwidth]{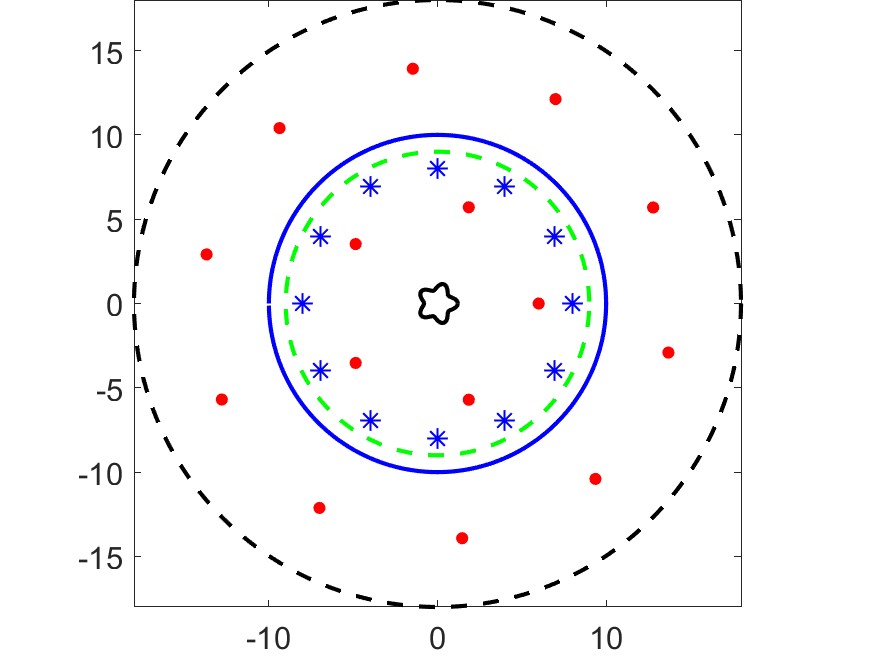}}\quad
	\subfigure[]{\includegraphics[width=0.3\textwidth]{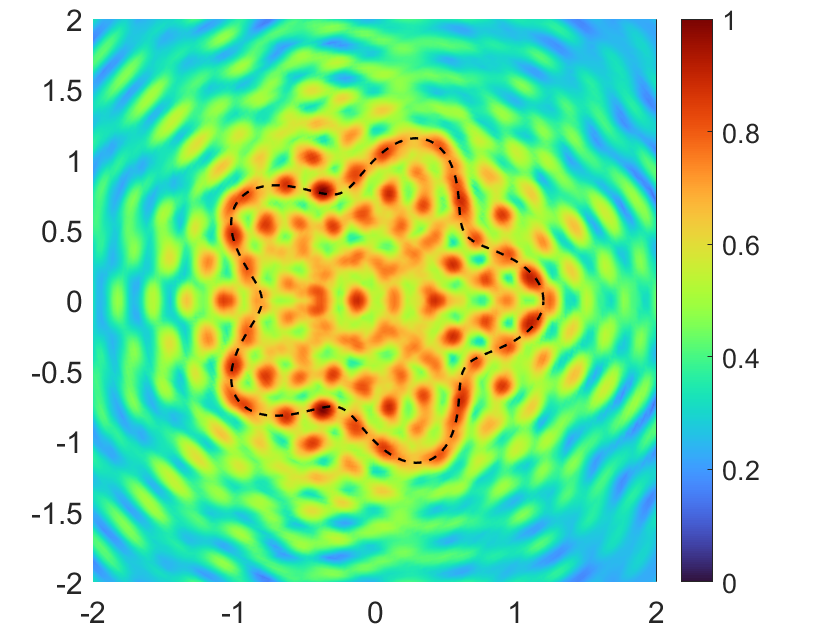}}\quad
	\subfigure[]{\includegraphics[width=0.3\textwidth]{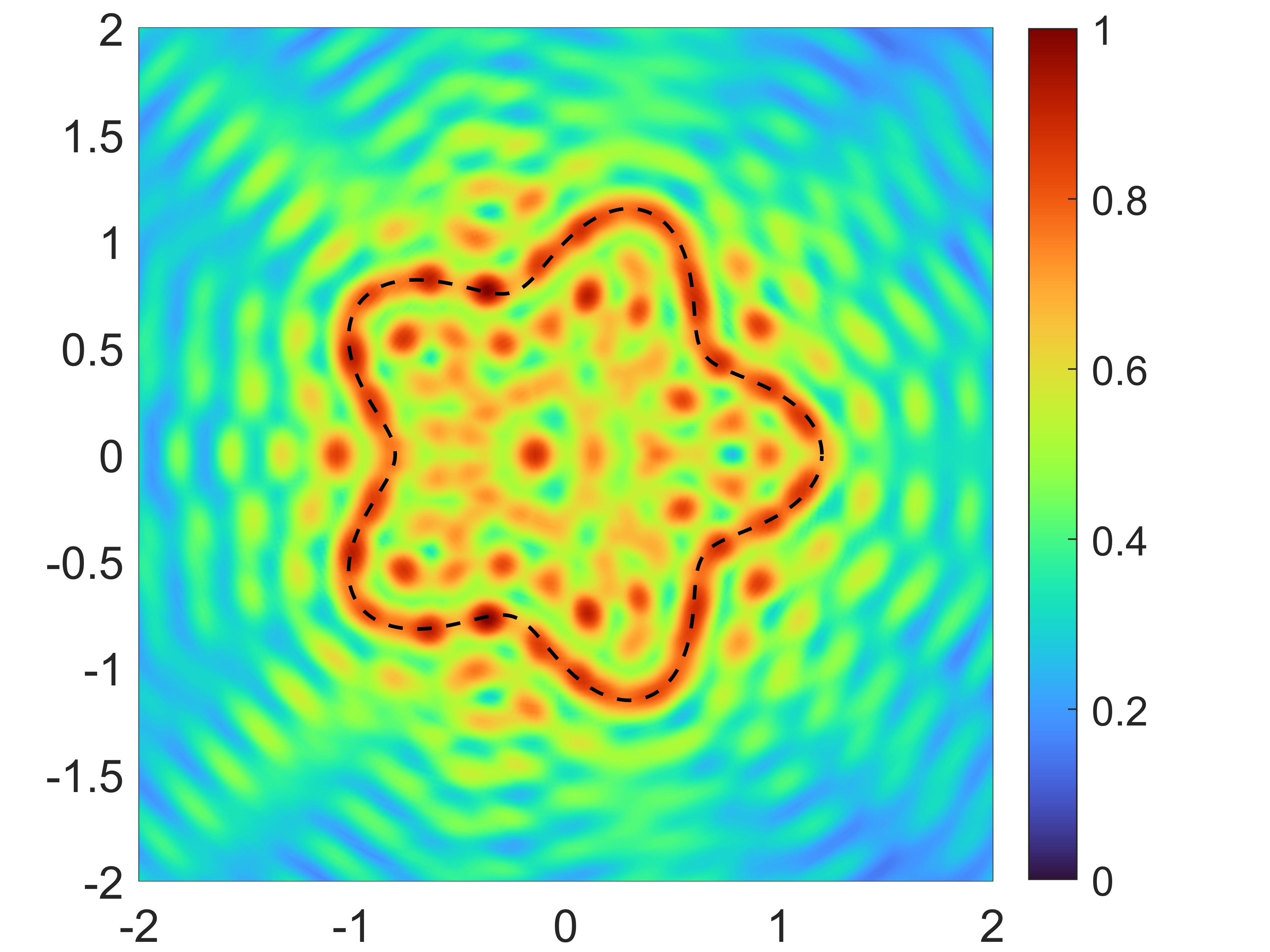}}\quad
	\subfigure[]{\includegraphics[width=0.3\textwidth]{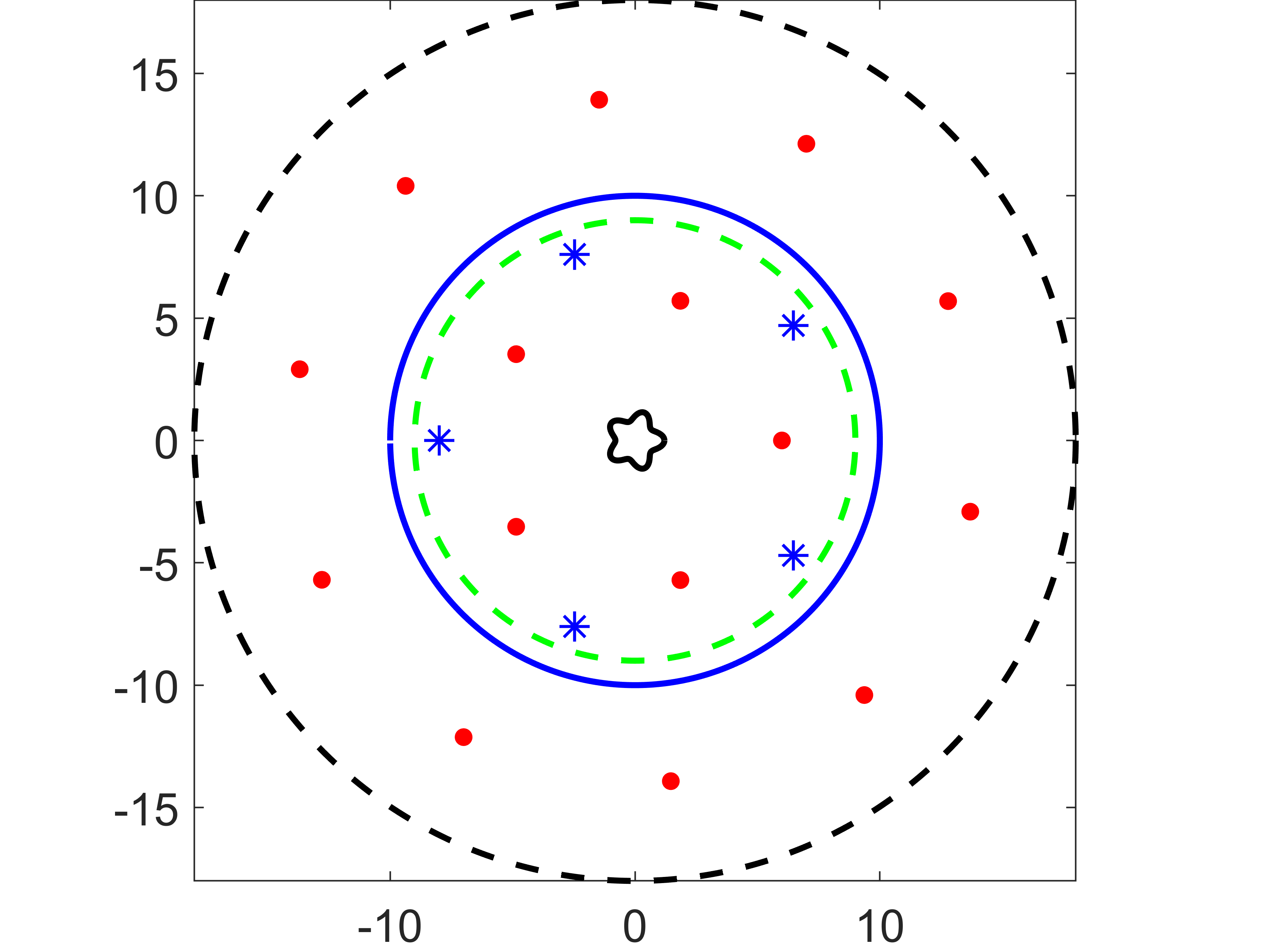}}\quad
	\subfigure[]{\includegraphics[width=0.3\textwidth]{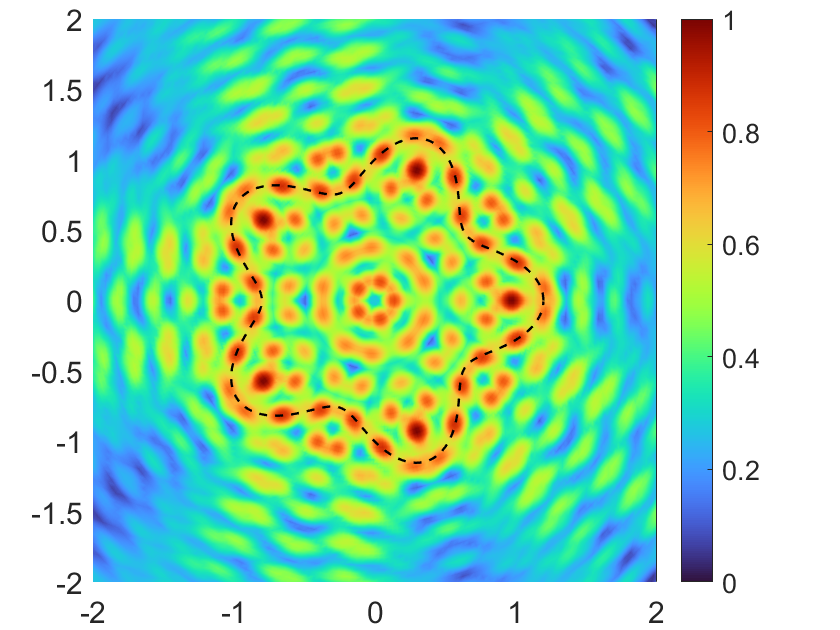}}\quad
	\subfigure[]{\includegraphics[width=0.3\textwidth]{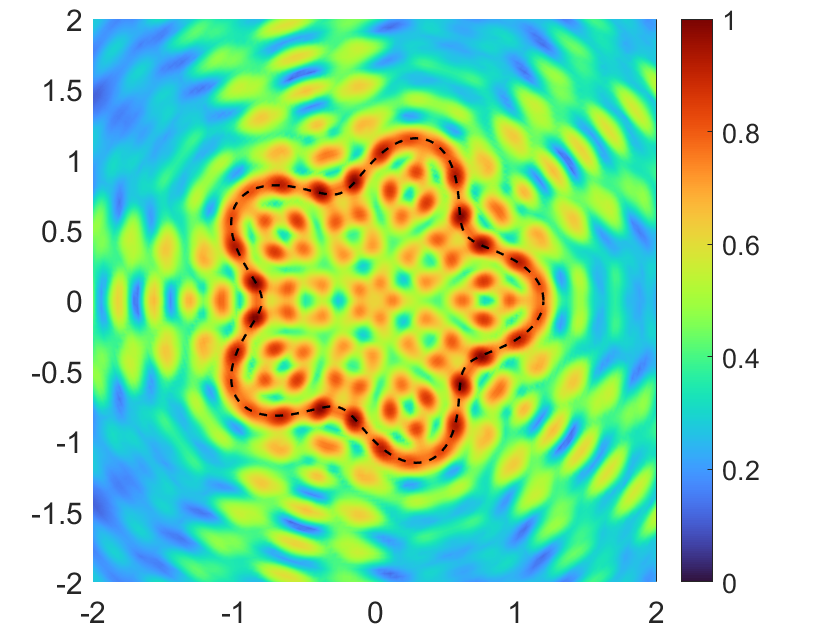}}\quad
	\caption{Reconstruction of the starfish by adding the different number of auxiliary sources to the co-inversion model. The first row: $M=12;$ The second row: $M=5;$ Column 1: Geometry setup; Column 2: Imaging of $I_C;$ Column 2: Imaging of $I_D.$}\label{fig:starfish}
\end{figure}

\section*{Acknowledgments}
	
D. Zhang is supported by NSFC grant 12171200. Y. Guo and Y. Chang are supported by NSFC grant 11971133.
	

\end{document}